\newcommand{\stkout}[1]{\ifmmode\text{\sout{\ensuremath{#1}}}\else\sout{#1}\fi}
\newtheorem{theorem}{Theorem}[subsection]
\newtheorem*{theorem*}{Theorem}
\newcounter{tmp}
\newtheorem{corollary}[theorem]{Corollary}
\newtheorem{definition}{Definition}[subsection]
\newtheorem{lemma}[theorem]{Lemma}
\newtheorem{remark}[theorem]{Remark}
\newcommand{\CC}{\mathbb{C}}
\newcommand{\RR}{\mathbb{R}}
\newcommand{\ZZ}{\mathbb{Z}}
\newcommand{\ind}{\mathrm{ind}}
\newcommand{\frakg}{\mathfrak{g}}
\newcommand{\frakp}{\mathfrak{p}}
\newcommand{\frakl}{\mathfrak{l}}
\newcommand{\frakq}{\mathfrak{q}}
\newcommand{\fraku}{\mathfrak{u}}
\newcommand{\frakn}{\mathfrak{n}}
\newcommand{\frakm}{\mathfrak{m}}
\newcommand{\diag}{\mathrm{diag}}
\title{Littlewood-Richardson Coefficient, Springer Fibers and the Annihilator Varieties of Induced Representations}
\author{Zhuohui Zhang\\Weizmann Institute of Science}
\email{zhuohui.zhang@weizmann.ac.il} 
\begin{document}
\begin{abstract}
    For $G=GL(n,\mathbb{C})$ and a parabolic subgroup $P=LN$ with a two-block Levi subgroup $L=GL(n_1)\times GL(n_2)$, the space $G\cdot (\mathcal{\mathcal{O}}+\frakn)$, where $\mathcal{O}$ is a nilpotent orbit of $\frakl$, is a union of nilpotent orbits of $\frakg$. In the first part of our main theorem, we use the geometric Sakate equivalence to prove that $\mathcal{O'}\subset G\cdot (\mathcal{\mathcal{O}}+\frakn)$ if and only if some Littlewood-Richardson coefficients do not vanish. The second part of our main theorem describes the geometry of the space $\mathcal{O}\cap\frakp$, which is an important space to study for the Whittaker supports and annihilator varieties of representations of $G$.
\end{abstract}
\maketitle

\section{Introduction}\label{introduction}
\indent A nilpotent orbit of a reductive group $G$ over $\CC$ is an orbit of the adjoint or the coadjoint action of $G$ on the nilpotent cone $\mathcal{N}\subset\frakg$ or $\frakg^*$, respectively. The set of nilpotent orbits $\mathcal{O}\subset\mathcal{N}$ forms a poset with a partial order $\mathcal{O}_1 < \mathcal{O}_2$ defined by the closure order $\mathcal{O}_1 \subset \overline{\mathcal{O}_2}$. For classical groups, there is a bijection between the nilpotent orbits and certain integer partitions. In particular, for $GL(n)$ and $SL(n)$, each partition $\alpha$ of the integer $n$ corresponds to a nilpotent orbit $\mathcal{O}_\alpha$. The counterpart of the closure order $\mathcal{O}_\alpha\subset\overline{\mathcal{O}_\beta}$ on partitions is given by
\[
    \alpha <\beta \text{ if and only if } \sum_{i=1}^k\alpha_i\leq \sum_{i=1}^k\beta_i\text{ for all }k.
\]
\indent The notion of \emph{induced orbits} provides a connection between the nilpotent orbits of a subgroup and the orbits of the ambient group. Following \cite[Chapter 7]{collingwoodbook}, for any nilpotent orbit $\mathcal{O}$ of a Levi subgroup $L\subset G$, there is an unique nilpotent orbit of $G$ which intersects $\mathcal{O} + \frakn$ in a Zariski-open subset. This orbit is called the \emph{induced orbit} $\ind_\frakl^\frakg(\mathcal{O})$ in $G$. On the other hand, we can define the \emph{Bala-Carter} inclusion $\mathrm{inc_\frakl^\frakg(\mathcal{O})}$ of a nilpotent orbit $\mathcal{O}$ of $\frakl$ as the $G$-saturation $G\cdot \mathcal{O}$ in $\frakg$. These two orbits are the maximal and minimal elements in the poset which consists of all the $G$-orbits contained in $G\cdot (\mathcal{O}+\frakn)$. The first part of the main result of this paper describes this poset of the nilpotent orbits contained in $G\cdot (\mathcal{O}+\frakn)$.\\
\indent We can also vary the set $G\cdot (\mathcal{O}+\frakn)$ by changing the choices of parabolic subgroups $P$ and the unipotent radicals $N\subset P$. For each pair of orbits $\mathcal{O}_\gamma$ of $\frakl$ and $\mathcal{O}_\alpha$ of $\frakg$, we will also describe an algebraic variety parametrizing all the parabolic subgroups $P=LN$ such that
\[
    \mathcal{O}_\alpha\subset G\cdot \left(\mathcal{O}_\gamma+\frakn\right).
\]
The collection of all such conjugate parabolic subgroups forms a closed subvariety of the partial flag manifold $G/P$, and some information of its topology is encoded in the Littlewood-Richardson coefficients. This variety is closely related to the geometry of $\mathcal{O}_\gamma\cap\frakp$, and we will provide this result as the second part of our main theorem. In Section \ref{appl}, we will also discuss the application of this geometric result to the wavefront set of subrepresentations of an induced representation, following a result in \cite{GourevitchSayag}.\\
\indent Now we state the main theorem of this paper:
\begingroup
    \setcounter{tmp}{\value{theorem}}
    \setcounter{theorem}{0}
    \renewcommand\thetheorem{\Alph{theorem}}
    \begin{theorem}
        \label{maintheorem}
        For $G = GL(n,\mathbb{C})$ and a parabolic subgroup $P=LN$ with a standard Levi subgroup $L = GL(n_1,\mathbb{C})\times GL(n_2,\mathbb{C}) \subset G$ embedded block-diagonally, let $\alpha$ be a partition of $n_1$ and $\beta$ be a partition of $n_2$, consider the nilpotent orbit $\mathcal{O}_{\alpha,\beta}=\mathcal{O}_{\alpha}\times \mathcal{O}_{\beta}$ of the subgroup $L$ where $\mathcal{O}_{\alpha},\mathcal{O}_\beta$ are the nilpotent orbits corresponding to the partitions $\alpha,\beta$ in $GL(n_1)$ and $GL(n_2)$, repsectively, then:
        \begin{enumerate}
            \item A $G$-orbit $\mathcal{O}_\gamma$ is contained in $G\cdot (\mathcal{O}_{\alpha,\beta}+\frakn)$ if and only if the Littlewood-Richardson coefficient $c_{\alpha,\beta}^\gamma\neq 0$.
            \item The Littlewood-Richardson coefficient is the number of irreducible components of the subvariety
            \[
                \mathcal{P}_{\alpha,\beta}^\gamma = \left\{x\in \mathcal{O_\gamma}\cap \frakp\mid p_\frakl(x)\in\mathcal{O}_\alpha\times \mathcal{O}_\beta\right\}\subset \mathcal{O}_\gamma\cap\frakp
            \]
            where the map $p_\frakl$ is the projection $p_\frakl: \frakp\mapsto \frakp/\frakn\cong \frakl$ onto the Levi subgroup.
        \end{enumerate}
    \end{theorem}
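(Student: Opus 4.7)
My plan is to derive both parts of Theorem~A from the geometric Satake equivalence for $G=GL(n,\CC)$, combined with a transverse-slice comparison between the affine Grassmannian $\Gr_G$ and the nilpotent cone $\mathcal{N}\subset\frakg$. Under Satake, the spherical $IC_\lambda$ on $\Gr_G$ correspond to the irreducible polynomial representations $V_\lambda$ of $GL(n,\CC)$, and convolution matches tensor product, giving
\[
IC_\alpha\star IC_\beta\;\cong\;\bigoplus_\gamma IC_\gamma^{\oplus c_{\alpha,\beta}^\gamma}.
\]
In particular, the set-theoretic image of the convolution morphism $m:\overline{\Gr^\alpha}\,\widetilde{\times}\,\overline{\Gr^\beta}\to \Gr_G$ is $\bigcup_{c_{\alpha,\beta}^\gamma\neq 0}\overline{\Gr^\gamma}$, and the Mirkovi\'c--Vilonen theorem equips a generic fiber of $m$ over $\Gr^\gamma$ with a canonical collection of $c_{\alpha,\beta}^\gamma$ top-dimensional irreducible components (the MV cycles).

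On the Lie-algebra side I consider the parabolic induction morphism
\[
\mu_{\alpha,\beta}\colon G\times^P\bigl(\overline{\mathcal{O}_{\alpha,\beta}}+\frakn\bigr)\longrightarrow \frakg,\qquad (g,x)\mapsto \Ad(g)(x),
\]
whose image is by construction $G\cdot(\mathcal{O}_{\alpha,\beta}+\frakn)$. The Mirkovi\'c--Vybornov isomorphism (specific to type $A$) identifies transverse slices to strata in $\Gr_G$ with transverse slices to the corresponding nilpotent orbits in $\mathcal{N}$, and under this identification $\mu_{\alpha,\beta}$ is locally (in a transverse slice to a point of $\mathcal{O}_\gamma$) isomorphic to $m$ restricted to a neighborhood of $\Gr^\gamma$. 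Part~(1) then follows from the Satake analysis: $\mathcal{O}_\gamma\subset G\cdot(\mathcal{O}_{\alpha,\beta}+\frakn)$ iff $\overline{\Gr^\gamma}\subset\mathrm{Im}(m)$ iff $c_{\alpha,\beta}^\gamma\neq 0$.

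For part~(2), the variety $\mathcal{P}_{\alpha,\beta}^\gamma$ is $P$-invariant, and the associated bundle $G\times^P\mathcal{P}_{\alpha,\beta}^\gamma\to \mathcal{O}_\gamma$ has fiber over $x\in\mathcal{O}_\gamma$ equal to the variety of parabolic subgroups $Q=gPg^{-1}$ such that $x$ lies in $\Ad(g)\bigl(\frakp\bigr)$ with Levi component conjugate to a point of $\mathcal{O}_\alpha\times\mathcal{O}_\beta$. The Mirkovi\'c--Vybornov identification sends this fiber to (an open subset of) the convolution fiber $m^{-1}(\text{pt})$ above $\Gr^\gamma$, whose top-dimensional components are precisely the MV cycles; their count is $c_{\alpha,\beta}^\gamma$. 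The main obstacle will be making the transverse-slice comparison sufficiently precise to transport not merely the stratification but the \emph{full} decomposition into irreducible components in a way compatible with the global $G$-action on $\mathcal{P}_{\alpha,\beta}^\gamma$. Concretely, I will need to verify that distinct MV cycles remain distinct after passing through the local identification and being extended by the $G$-orbit, and that no spurious components arise from the smooth factor separating the fiber of $\mu_{\alpha,\beta}$ from the fiber of $m$; this should follow from the basis property of MV cycles in the top Borel--Moore homology of the convolution fiber, but requires careful dimension bookkeeping in the nilpotent picture, combined with a stratified analysis of the projection $p_\frakl:\mathcal{O}_\gamma\cap\frakp\to\frakl$ to ensure the open stratum of Levi type $(\alpha,\beta)$ inherits the expected number of components.
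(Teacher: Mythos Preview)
Your strategy of using geometric Satake to obtain the decomposition $\mathrm{IC}^\alpha\star\mathrm{IC}^\beta\cong\bigoplus_\gamma(\mathrm{IC}^\gamma)^{\oplus c_{\alpha,\beta}^\gamma}$ matches the paper. The passage from the affine Grassmannian to the nilpotent cone, however, is handled very differently.

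\textbf{How the paper proceeds.} The paper never invokes Mirkovi\'c--Vybornov or any transverse-slice comparison. Instead it uses an elementary identification (Lemma~\ref{fiberisom}): for $x\in\Gr_G^\gamma$ the Satake fiber $m^{-1}(x)\cap\Gr_G^{\alpha,\beta}$ parametrizes lattice chains $\Lambda_\gamma\subset\Lambda_\alpha\subset\mathcal{O}^n$, and passing to the quotient $W=\mathcal{O}^n/\Lambda_\gamma$ with the nilpotent operator ``multiplication by $t$'' realizes the Satake fiber \emph{directly} as the Spaltenstein-type variety $\mathcal{G}_{\alpha,\beta}(x)\subset G/P_{[n_1,n_2]}$ of $x$-stable subspaces with prescribed Jordan types. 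This is the generalized Springer fiber stratum $\mathcal{P}_{\alpha,\beta}(x)$. For Part~2 the paper then considers the map $g\mapsto\Ad(g)^{-1}x$ from $\mathcal{P}_{\alpha,\beta}(x)\cdot P\subset G$ onto $\mathcal{P}_{\alpha,\beta}^\gamma$, observes it is a quotient by the connected group $C_G(x)$, and does a short dimension count to conclude that the component number $c_{\alpha,\beta}^\gamma$ survives. Your Mirkovi\'c--Vybornov route would also work in principle, but it imports heavier machinery and requires checking compatibility of the slice isomorphism with convolution/parabolic induction---precisely the ``main obstacle'' you flag---whereas the paper's lattice-quotient identification is a one-line observation that sidesteps all of this.

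\textbf{Two genuine issues in your write-up.} First, the sentence ``the set-theoretic image of $m:\overline{\Gr^\alpha}\,\widetilde{\times}\,\overline{\Gr^\beta}\to\Gr_G$ is $\bigcup_{c_{\alpha,\beta}^\gamma\neq0}\overline{\Gr^\gamma}$'' is false: $m$ is proper and surjects onto $\overline{\Gr^{\alpha+\beta}}$, so its image is a single Schubert variety. What Part~1 actually needs is that $m^{-1}(x)\cap\Gr_G^{\alpha,\beta}$ (intersection with the \emph{open} convolution stratum) is nonempty iff $c_{\alpha,\beta}^\gamma\neq0$. The ``only if'' direction does not follow from the decomposition theorem alone---the fiber could a priori have components of too-low dimension that contribute nothing to top cohomology. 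The paper closes this gap by invoking Haines' equidimensionality theorem for convolution fibers over minuscule coweights, which applies since every dominant coweight of $GL_n$ is a sum of minuscules. You will need the same ingredient. Second, the top-dimensional components of the convolution fiber are not MV cycles in the standard sense (those live in semi-infinite orbits $S_\mu\cap\overline{\Gr^\lambda}$); the component count here comes from semi-smallness and Haines, not from the MV basis theorem.
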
   
\endgroup
The first part of this theorem is a direct consequence of the theory of Hall polynomials which was discussed for abelian $p$-groups in \cite[Appendix Chapter II, Theorem AZ.1]{macdonaldbook}, and can be proven as a corollary of the Hall's theorem in \cite[Theorem 2.2]{klein1969hall}. The proof of Hall's theorem is combinatorial and uses Schubert calculus. However, in order to find an interpretion of the Littlewood-Richardson coefficients in our context, we would like to give a geometric proof to the first part of the theorem as a corollary of the geometric Satake isomorphism. The geometric constructions used in the proof of the first part will shed light on the proof of the second part of Theorem \ref{maintheorem} in Section \ref{nilpchapter}.\\
\indent It is worth mentioning that since we can induce the orbits by stages, the first part of Theorem \ref{maintheorem} can be generalized to arbitrary standard parabolic subgroups:
\begingroup
    \setcounter{tmp}{\value{theorem}}
    \setcounter{theorem}{1}
    \renewcommand\thetheorem{\Alph{theorem}}
\begin{corollary}
    For any partition $\underline{n}=(n_1,\ldots,n_r)$ of $n$ and partitions $\alpha_i$ of the integers $n_i$, we take a standard parabolic subgroup $P_{\underline{n}}=L_{\underline{n}}N_{\underline{n}}$ with a Levi subgroup isomorphic to $GL(n_1)\times\ldots\times GL(n_r)$. A nilpotent orbit $\mathcal{O}_\gamma$ of $GL(n)$ is contained in $G\cdot (\prod\mathcal{O}_{\alpha_i}+\frakn_{\underline{n}})$ if and only if the number
    \[
        N^\gamma_{\alpha_1,\ldots,\alpha_r}=\sum_{\beta_2,\ldots,\beta_{r-1}}c_{\alpha_1,\alpha_2}^{\beta_2}c_{\beta_2,\alpha_3}^{\beta_3}c_{\beta_3,\alpha_4}^{\beta_4}\ldots c_{\beta_{r-1},\alpha_r}^{\gamma}
    \]
    is nonzero.
\end{corollary}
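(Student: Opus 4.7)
The plan is to proceed by induction on $r$, using induction by stages. The base case $r = 2$ is Theorem \ref{maintheorem}(1). For the inductive step, let $Q = L_Q N_Q$ be the standard parabolic of $G$ with Levi $L_Q = GL(n_1) \times GL(n-n_1)$, so that $P_{\underline{n}} \subset Q$. The intersection $P_{\underline{n}} \cap L_Q$ is a parabolic of $L_Q$ of the form $GL(n_1) \times P''$, where $P'' = L'' N''$ is the standard parabolic of $GL(n-n_1)$ with Levi $L'' = GL(n_2) \times \cdots \times GL(n_r)$. At the Lie algebra level one has a direct sum of vector spaces $\frakn_{\underline{n}} = \frakn_Q \oplus (0 \oplus \frakn'')$ inside $\frakg$, where $\frakn'' \subset \mathfrak{gl}(n-n_1)$ is the nilradical of $P''$.

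First I would establish the set-theoretic identity
\[
    G \cdot \Bigl(\prod_i \mathcal{O}_{\alpha_i} + \frakn_{\underline{n}}\Bigr) \;=\; \bigcup_\beta G \cdot \bigl(\mathcal{O}_{\alpha_1} \times \mathcal{O}_\beta + \frakn_Q\bigr),
\]
where $\beta$ ranges over partitions of $n - n_1$ satisfying $\mathcal{O}_\beta \subset GL(n - n_1) \cdot (\prod_{i \geq 2} \mathcal{O}_{\alpha_i} + \frakn'')$. This follows from the Lie algebra decomposition above together with the facts that the $GL(n-n_1)$-factor of $L_Q$ commutes with the $GL(n_1)$-factor (hence fixes $\mathcal{O}_{\alpha_1}$ pointwise) and that $L_Q$ normalizes $\frakn_Q$. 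Second, the inductive hypothesis applied to $GL(n-n_1)$ with the parabolic $P''$ identifies these $\beta$ as exactly the partitions with $N^\beta_{\alpha_2, \ldots, \alpha_r} \neq 0$. Third, Theorem \ref{maintheorem}(1) applied to the pair $(G, L_Q)$ says that $\mathcal{O}_\gamma \subset G \cdot (\mathcal{O}_{\alpha_1} \times \mathcal{O}_\beta + \frakn_Q)$ if and only if $c^\gamma_{\alpha_1, \beta} \neq 0$.

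Combining these three facts, $\mathcal{O}_\gamma$ is contained in $G \cdot (\prod_i \mathcal{O}_{\alpha_i} + \frakn_{\underline{n}})$ if and only if there exists $\beta$ with $c^\gamma_{\alpha_1, \beta}\, N^\beta_{\alpha_2, \ldots, \alpha_r} > 0$. Since all Littlewood--Richardson coefficients are non-negative integers, this is equivalent to the non-vanishing of $\sum_\beta c^\gamma_{\alpha_1, \beta}\, N^\beta_{\alpha_2, \ldots, \alpha_r}$. Finally, associativity of the Schur product gives
\[
    \sum_\beta c^\gamma_{\alpha_1, \beta}\, N^\beta_{\alpha_2, \ldots, \alpha_r} \;=\; N^\gamma_{\alpha_1, \ldots, \alpha_r},
\]
since both sides count the multiplicity of $s_\gamma$ in the product $s_{\alpha_1} s_{\alpha_2} \cdots s_{\alpha_r}$ independently of bracketing. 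This completes the induction.

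The main conceptual obstacle is the set-theoretic identity in the first step. Once the chain $P_{\underline{n}} \subset Q \subset G$ and the decomposition of $\frakn_{\underline{n}}$ are in place, the argument reduces to checking that taking the $GL(n-n_1)$-saturation of $\prod_i \mathcal{O}_{\alpha_i} + \frakn_{\underline{n}}$ commutes with leaving the summands $\mathcal{O}_{\alpha_1}$ and $\frakn_Q$ untouched, which uses only that the two Levi factors of $L_Q$ commute and that $L_Q$ normalizes $\frakn_Q$.
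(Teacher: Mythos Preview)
Your proposal is correct and is precisely the induction-by-stages argument the paper invokes (without supplying details) immediately before stating the corollary. The only cosmetic difference is that you peel off the first block $GL(n_1)$ while the paper's iterated formula for $N^\gamma_{\alpha_1,\ldots,\alpha_r}$ brackets from the left (so would more directly match peeling off $GL(n_r)$), but your associativity remark resolves this harmlessly.
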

In terms of Schur functions, the number $N_{\alpha_1,\ldots,\alpha_r}^\gamma$ is the coefficient of $\mathbf{s}_\gamma$ in the product of the Schur polynomials $\mathbf{s}_{n_1}\ldots \mathbf{s}_{n_r}$.
\endgroup

\subsection*{Acknowledgements}
I thank Dmitry Gourevitch for his support, enlightening discussions and valuable comments about this project. This research is supported by the Israel Science Foundation (ISF) 249/17 and ERC Starting Grants 637912.
\section{Preliminaries and Notation}
 In this section, we specify the notations which are used throughout this paper. Let $G$ be a semisimple algebraic group over $\CC$, we denote by
\begin{itemize}
	\item $T\subset B\subset G$ a choice of maximum torus $T$ and a Borel subgroup $B$,
	\item $X_*(T)$ the cocharacter lattice, and $X^*(T)$ the character lattice of $T$,
	\item $\Delta_G(T)$ the set of roots, $\Delta_{G,+}(T)$ the set of positive roots, $\Delta_{G,s}(T)$ the set of simple roots, and $\Delta^G(T)$ the set of coroots.
\end{itemize}
We can identify the character lattice as the integer lattice $\ZZ^n$ with $n$ being the rank of $G$. Each partition $\lambda=(\lambda_1,\ldots,\lambda_n)$ corresponds to the character
\[
	(t_1,\ldots,t_n)\mapsto t_1^{\lambda_1}\ldots t_n^{\lambda_n}
\]
on $T$. In particular, under this correspondence, the half sum of positive roots 
\[
    \rho = \frac{1}{2}\sum_{\alpha\in \Delta_{G,+}}\alpha
\]
corresponds to the partition $(n-1,n-2,\ldots,1,0)$. Fixing a maximal torus $T$, the quadruple $(X^*,\Delta^G,X_*,\Delta_G)$ is called the \emph{root datum} of $G$. Its dual root datum $(X_*,\Delta_G, X^*,\Delta^G)$ is obtained by switching the character lattice/roots and the cocharacter lattice/coroots. The algebraic group corresponding to the dual root datum is called the \emph{Langlands dual group} and is denoted by $\check{G}$.

\subsection{Young Diagrams}\label{LittlewoodRichardsonSection}
In this section we will summarize the basic concepts related to Young diagrams and define the Littlewood-Richardson coefficients. A partition $\alpha=(\alpha^1,\ldots,\alpha^r)$ of an integer $n$ corresponds to a Young diagram (whose $i$-th row has length $\alpha^i$) of the shape $\alpha$. In a \emph{Young tableau} of the shape $\alpha$, integers, starting with 1, are filled into the boxes of the Young diagram $\alpha$. We denote the set of all Young tableaux of the shape $\alpha$ by $\mathcal{T}_\alpha$. A Young tableau is  called \emph{standard} if the entries in each box are strictly increasing from the left to right along each row, and increasing from top to bottom along each column. If the entries are weakly increasing along the rows, the tableau is called a \emph{semistandard} tableau. It is worth noting that each standard Young tableau $T$ of the shape $\alpha$ corresponds to a sequence of Young diagrams
\[
    0= \alpha_0\subset\alpha_1\subset\ldots\subset\alpha_r = \alpha
\]
with each $\alpha_i$ formed by the boxes in $T$ with contents $\leq i$. For example, the Young tableau
\begin{center}
    $T=$
    \ytableausetup{smalltableaux}
    \begin{ytableau}
        1 & 1 & 2\\
        1 & 3\\
        2\\
    \end{ytableau}
\end{center}
corresponds to the following sequence of Young diagrams:
\begin{center}
    $\emptyset\subset$ 
    \ydiagram{2,1} $\subset$
    \ydiagram{3,1,1} $\subset$
    \ydiagram{3,2,1}.
\end{center}

\indent Similar to the usual Young diagrams discussed above, for each pair of Young diagrams $(\alpha,\gamma)$ such that $\alpha\subset\gamma$, a \emph{skew Young diagram} is the complement of $\alpha$ in $\gamma$, with the two diagrams aligned along their top and left edges. For example, for $\alpha=(2,1)$ and $\gamma = (4,3,2)$, the skew Young diagram $\gamma/\alpha$ looks like
\begin{center}
    \ydiagram{2+2,1+2,0+2}.
\end{center}
\indent We denote the set of all semistandard skew tableaux of the shape $\gamma/\alpha$ by $\mathcal{T}_{\gamma/\alpha}$. Similar to the correspondence between Young tableaux and sequences of Young diagrams described above, any skew tableau $T\in\mathcal{T}_{\gamma/\alpha}$ corresponds to a sequence of Young diagrams from $\alpha$ to $\gamma$, whose $i$-th stage $\alpha_i$ consists of the boxes in $T$ such that the entries in each of the selected boxes are $\leq i$:
\[
    \alpha = \alpha_0 \subset \alpha_1 \subset \alpha_2\subset\ldots\subset \alpha_r =\gamma.
\]
The entries of such a skew tableau also gives us a new partition 
\[
    \beta = (|\alpha_1/\alpha_0|,\ldots,|\alpha_r/\alpha_{r-1}|).
\] 
For example, the skew tableau
\begin{center}
    \ytableausetup{smalltableaux}
    \ytableaushort{\none\none\none11,\none\none\none2,\none12,12}
\end{center}
yields the partition $\beta = (4,3)$. We say a tableau $T$, or a corresponding sequence of Young diagrams from $\alpha$ to $\gamma$, is of type $(\alpha,\beta;\gamma)$ if this new partition is $\beta$.

\subsection{The Littlewood-Richardson Rule}
For each pairs of Young diagrams $(\alpha,\gamma)$ such that $\alpha\subset \gamma$, a semistandard skew Young tableau is \emph{Littlewood-Richardson} if its \emph{reverse lattice words}, i.e. the string of content of each box, read from top to bottom and from right to left like in Hebrew, satisfies the \emph{Yamanouchi condition}, i.e. for each content $i$, the number of $i$'s in the length-$k$ prefix of its reverse lattice word is great or equal to the number of $i+1$'s in the length-$k$ prefix. For example, the reverse lattice word of the tableau
\begin{center}
    \ytableausetup{smalltableaux}
    \begin{ytableau}
        \none & \none & 1&1\\
        \none & 1 & 2\\
        2 & 3
    \end{ytableau}
\end{center}
is $112132$, which satisfies the Yamanouchi condition. For any triple of Young diagrams $(\alpha,\beta;\gamma)$ such that $\alpha,\beta\subset\gamma$, the \emph{Littlewood-Richardson coefficient} $c_{\alpha,\beta}^\gamma$ is defined as the number of Littlewood-Richardson tableaux of type $(\alpha,\beta;\gamma)$.\\
\indent An algebraic interpretation of the Littlewood-Richardson coefficient can be given by the Hall-Littlewood polynomials $G^\gamma_{\alpha,\beta}(q)$. Considering a pair $(\mathcal{O},\mathfrak{p})$ of a discrete valuation ring $\mathcal{O}$ and its maximal ideal $\mathfrak{p}$ with a finite residue field of size $q$, for any triple of partitions $(\alpha,\beta;\gamma)$, denoting by $M_\alpha$ the direct sum of cyclic modules $M_\alpha = \bigoplus_{i}\mathcal{O}/\frakp^{n_i}$ for any partition $\alpha = (n_1,\ldots,n_r)$, $G^\gamma_{\alpha,\beta}(q)$ is defined as the number of submodules $N\subset M_{\gamma}$ such that $N\cong M_{\alpha}$ and $M_{\gamma}/N\cong M_{\beta}$. In fact, $G^\gamma_{\alpha,\beta}(q)$ is a polynomial in $q$ with degree $\langle\rho,\alpha+\beta-\gamma\rangle$ called the \emph{Hall-Littlewood polynomial}, and its top-degree coefficient is equal to the Littlewood-Richardson coefficient $c^\gamma_{\alpha,\beta}$. A detailed proof of this fact can be found in the appendix of \cite[Chapter 2]{macdonaldbook}.\\
\indent However, it's important to have a geometric model of the abelian group extensions so that we can obtain more information related to nilpotent orbits. For this purpose, in the following sections, we will introduce the theory of affine Grassmannian and a geometric interpretation of the Littlewood-Richardson coefficient.

\section{Lattices and Affine Grassmannian}
\label{affinegrassmannian}
In this paper, we will discuss four interpretations for the Littlewood-Richardson coefficient: the first picture is related to the extensions of torsion $\mathbb{C}[t]$-modules and the top-degree coefficient of Hall-Littlewood polynomials, which we have already introduced in the previous section. The second picture relates the Littlewood-Richardson coefficients to extensions of sublattices in $\mathcal{O}^n$. This approach is closely related to the first picture, and requires the understanding of the geometry of affine Grassmannian. The third picture applies the Geometric Satake Isomorphism to draw connection between the Littlewood-Richardson coefficients and the decomposition multiplicities of tensor products of finite dimensional representations of $GL_n$. The fourth picture shows that the Littlewood-Richardson coefficients is equal to the number of irreducible components of certain subvarieties of the Springer fibers in a generalized flag manifold. The whole Section \ref{affinegrassmannian} is devoted to the discussion of the second and the third picture. The fourth picture will be introduced in Section \ref{nilpchapter}.\\
\indent Throughout this paper, we will fix the ring $\mathcal{O}=\mathbb{C}\left[\!\left[t\right]\!\right]$ with a maximal ideal $\frakp = t\mathcal{O}$. We will introduce a geometric model for the extensions of the finite dimensional $\mathcal{O}$-module
\begin{equation}\label{defm}
    M_{\alpha} = \CC[t]/(t^{\alpha_1})\oplus\ldots \CC[t]/(t^{\alpha_n}).
\end{equation}
This module is the counterpart over $\mathcal{O}$ of the direct sum of cyclic modules in the definition of the Hall-Littlewood polynomial for a partition $\alpha = \left(\alpha_1,\ldots,\alpha_n\right)$. We can add zeros to the end of $\alpha$ without causing any changes to $M_\alpha$. The following lemma relates the Littlewood-Richardson coefficient to the extensions of module of type $M_\alpha$: 
\begin{lemma}\label{mainlemma}
	For three partitions $\alpha,\beta,\gamma$ and three modules $M_\alpha,M_\beta,M_\gamma$ as defined above, there exists an exact sequence
	\[
	0\rightarrow M_{\alpha}  \rightarrow M_{\gamma } \rightarrow M_{\beta}  \rightarrow 0
	\]
	if and only if the partitions $\alpha,\beta,\gamma$ make the Littlewood-Richardson coefficient $c_{\alpha,\beta}^{\gamma}$ non-vanishing. 
\end{lemma}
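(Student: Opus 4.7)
The plan is to prove both directions of Lemma \ref{mainlemma} by building a geometric model for the module extensions inside the affine Grassmannian of $GL_n$, and then applying the geometric Satake equivalence mentioned in the introduction.

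First I would translate the $\mathcal{O}$-module extension problem into a lattice problem. Realize $M_\gamma$ as the quotient $\mathcal{O}^n / L_\gamma$ of the standard lattice $L_0 = \mathcal{O}^n$ by a sublattice $L_\gamma$ whose elementary divisors recover the partition $\gamma$ (after padding $\gamma$ with zeros so its length is $n$). Then a submodule $N \subset M_\gamma$ with $N \cong M_\alpha$ and $M_\gamma / N \cong M_\beta$ corresponds exactly to an intermediate lattice $L_\gamma \subset L_\beta \subset L_0$ with $L_0/L_\beta \cong M_\beta$ and $L_\beta/L_\gamma \cong M_\alpha$. This identifies the set of such submodules with a locally closed subvariety $X_{\alpha,\beta}^\gamma$ of a classical Grassmannian, and isomorphism classes of short exact sequences $0 \to M_\alpha \to M_\gamma \to M_\beta \to 0$ correspond bijectively to $\mathrm{Aut}(M_\gamma)$-orbits on $X_{\alpha,\beta}^\gamma$.

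Next, I would reinterpret $X_{\alpha,\beta}^\gamma$ as a fiber of the convolution map on the affine Grassmannian $\Gr = GL_n(K)/GL_n(\mathcal{O})$ (with $K = \CC((t))$), whose $GL_n(\mathcal{O})$-orbits $\Gr_\lambda$ are indexed by partitions. The convolution variety $\Gr_\beta \,\widetilde{\times}\, \Gr_\alpha$ parametrizes chains $L_\gamma \subset L_\beta \subset L_0$ with successive quotients of types $\beta$ and $\alpha$, and the convolution map
\[
    m : \Gr_\beta \,\widetilde{\times}\, \Gr_\alpha \to \overline{\Gr_{\alpha + \beta}} \subset \Gr
\]
records the innermost lattice $L_\gamma$. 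After the above identifications, $X_{\alpha,\beta}^\gamma$ is precisely the fiber $m^{-1}(L_\gamma)$ over a fixed $L_\gamma \in \Gr_\gamma$.

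By the geometric Satake equivalence, the convolution $\mathrm{IC}_\beta \star \mathrm{IC}_\alpha$ of the intersection cohomology sheaves of the orbit closures decomposes as $\bigoplus_\gamma \CC^{c_{\alpha,\beta}^\gamma} \otimes \mathrm{IC}_\gamma$. Combined with the decomposition theorem for the proper semismall map $m$, this yields that the fiber $m^{-1}(L_\gamma)$ is equidimensional with exactly $c_{\alpha,\beta}^\gamma$ top-dimensional irreducible components (the Mirković-Vilonen cycles for the triple). The if-and-only-if statement is then immediate, since $c_{\alpha,\beta}^\gamma \neq 0$ is equivalent to non-emptiness of $m^{-1}(L_\gamma)$, which is equivalent to the existence of an extension. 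The main obstacle is the final enumeration: showing that the number of isomorphism classes of extensions equals $c_{\alpha,\beta}^\gamma$. My plan is to argue that the $\mathrm{Aut}(M_\gamma)$-orbits on $m^{-1}(L_\gamma)$ coincide with its top-dimensional irreducible components, via a dimension count comparing the dimension of a generic $\mathrm{Aut}(M_\gamma)$-orbit to $\dim X_{\alpha,\beta}^\gamma$; consistency with the top-degree coefficient of the Hall polynomial $G_{\alpha,\beta}^\gamma(q)$ from Section \ref{LittlewoodRichardsonSection} serves as a useful sanity check.
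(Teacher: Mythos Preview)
Your plan follows the paper's proof closely: reinterpret extensions as lattice chains sitting in the fiber of the convolution map $m$ on $\Gr_{GL_n}$, then invoke geometric Satake and the decomposition theorem---together with Haines' equidimensionality result for minuscule convolutions (not MV cycles, which index weight spaces rather than tensor-product multiplicity spaces)---to identify $c_{\alpha,\beta}^\gamma$ with the number of irreducible components of that fiber. The paper's written proof in fact stops at the if-and-only-if statement and does not carry out the $\mathrm{Aut}(M_\gamma)$-orbit count you correctly flag as the remaining obstacle, so your proposed dimension argument already goes slightly beyond what is made explicit there.
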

A purely combinatorial proof of this lemma can be found in \cite[Theorem 4.1-4.2]{klein1968}. The first part of Theorem \ref{maintheorem} is a direct consequence of this lemma. In the later sections of this paper, we will seek a geometric proof of this lemma using affine grassmannians, which will prepare for us the tools we will use to prove the second part of Theorem \ref{maintheorem}.

\subsection{Preliminaries on Affine Grassmannian}\label{prelimaffine}


The \emph{affine Grassmannian} is a geometric object parametrizing the full-rank $\mathcal{O}$-sublattices in $\mathcal{O}^n$, which will be used in the geometric proof of Lemma \ref{mainlemma}.
\begin{definition}
	Setting $\mathcal{O}=\mathbb{C}\left[\!\left[t\right]\!\right]$ and $K=\mathbb{C}\left(\!\left(t\right)\!\right)$, which are the rings of formal power series and formal Laurent series, respectively, let $G$ be a semisimple algebraic group, the \emph{affine Grassmannian} of $G$ is defined as the quotient
	\[
		\mathrm{Gr}_{G}=G\left(K\right)/G\left(\mathcal{O}\right).
	\]
\end{definition}
The affine Grassmannian is an \emph{ind-scheme}, which means that it is the direct limit of finite-dimensional closed subschemes. There exists a stratification on the affine Grassmannian based on the \emph{Cartan decomposition} (See \cite[Section 2]{MirkovicVilonen} and \cite[Proposition 1.3.2]{BaumannRiche}) of $G$, in which the cells are parametrized by the dominant coweights
\begin{equation}
	G(K) =\coprod_{\lambda\in X_{*}\left(T\right)^{+}} G\left(\mathcal{O}\right)\cdot L_{\lambda}\cdot G\left(\mathcal{O}\right).
\end{equation}
In the case $G = GL_n$, $L_\lambda$ is the diagonal matrix $L_\lambda = \mathrm{diag}\left(t^{\lambda_1},\ldots,t^{\lambda_n}\right)$
where $\lambda=(\lambda_1,\ldots,\lambda_n)$ is a partition of length $n$. We define an \emph{affine Schubert cell} as the quotient
\[
    \mathrm{Gr}_G^\lambda = G\left(\mathcal{O}\right)\cdot L_{\lambda}\cdot G\left(\mathcal{O}\right)/G\left(\mathcal{O}\right).
\]
The closure $\overline{\mathrm{Gr}_G^\lambda}$ of this cell is called an \emph{affine Schubert variety}. By \cite[Proposition 1.3.2]{BaumannRiche}, it is the disjoint union of affine Schubert cells $\mathrm{Gr}_G^\mu$ with $\mu\leq \lambda$:
\[
	\overline{\mathrm{Gr}_G^\lambda} = \coprod_{\substack{\mu\in X_{*}\left(T\right)^{+}\\ \mu\leq \lambda}} \mathrm{Gr}_G^\mu.
\]
The order $\mu\leq \lambda$ on the partitions is given by the closure order described in the beginning of Section \ref{introduction}.

\subsection{Affine Schubert Varieties and Lattices in $\mathcal{O}^n$}\label{latticesubsection}
We will use the affine Schubert varieties to parametrize full-rank $\mathcal{O}$-sublattices in $\mathcal{O}^n$. The correspondence is established in
 the following lemma:
\begin{lemma}\label{Lattice1}
	For any dominant coweight $\lambda = (\lambda_1,\ldots,\lambda_n)\in X_*(T)^+$, denoting by $\Lambda_x$ the lattice generated by the column vectors of $x\in \mathrm{Gr}_G^\lambda$, then there is a bijection between the elements in $x\in \mathrm{Gr}_G^\lambda$ and full-rank $\mathcal{O}$-sublattices $\Lambda_x \subset \mathcal{O}^n$ such that
	\[
		\mathcal{O}^n/\Lambda_x \cong \bigoplus_i \CC[t]/(t^{\lambda_i}).
	\]
\end{lemma}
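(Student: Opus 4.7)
The plan is to realize the bijection by means of the classical dictionary between the affine Grassmannian of $GL_n$ and $\mathcal{O}$-lattices, and then cut out the sublattices of $\mathcal{O}^n$ (as opposed to arbitrary lattices in $K^n$) by restricting to dominant coweights $\lambda$ with $\lambda_i\geq 0$.

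First I would construct the map $x\mapsto \Lambda_x$. Given $x\in \mathrm{Gr}_G^\lambda$, pick any representative $g\in GL_n(K)$ with $g\cdot GL_n(\mathcal{O}) = x$, and set $\Lambda_x = g\cdot \mathcal{O}^n\subset K^n$. Since $GL_n(\mathcal{O})$ stabilises $\mathcal{O}^n$, this is independent of the choice of representative, so the map is well defined on the quotient. To see that $\Lambda_x\subset \mathcal{O}^n$ and to compute the quotient, use the Cartan decomposition to write $g = k_1 L_\lambda k_2$ with $k_1,k_2\in GL_n(\mathcal{O})$. Then $\Lambda_x = k_1 L_\lambda\cdot \mathcal{O}^n = k_1\cdot \bigoplus_i t^{\lambda_i}\mathcal{O}$, which sits inside $\mathcal{O}^n$ because each $\lambda_i\geq 0$; applying $k_1^{-1}$ gives an $\mathcal{O}$-module isomorphism
\[
    \mathcal{O}^n/\Lambda_x \;\xrightarrow{\;\sim\;}\; \mathcal{O}^n\big/\bigoplus_i t^{\lambda_i}\mathcal{O} \;\cong\; \bigoplus_i \CC[t]/(t^{\lambda_i}),
\]
which is the required structure.

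Next I would construct the inverse. Given a full-rank $\mathcal{O}$-sublattice $\Lambda\subset \mathcal{O}^n$ with quotient $\bigoplus_i \CC[t]/(t^{\lambda_i})$, the structure theorem for finitely generated modules over the PID $\mathcal{O}$ (equivalently, Smith normal form over the DVR $\mathcal{O}$) produces an $\mathcal{O}$-basis $e_1,\ldots,e_n$ of $\mathcal{O}^n$ with respect to which $\Lambda = \bigoplus_i t^{\lambda_i}\mathcal{O} e_i$. Collecting the $e_i$ into the columns of a matrix $h\in GL_n(\mathcal{O})$, the matrix $g := h\cdot L_\lambda$ satisfies $g\cdot \mathcal{O}^n = \Lambda$ and manifestly lies in $GL_n(\mathcal{O})\cdot L_\lambda\cdot GL_n(\mathcal{O})$, so the assignment $\Lambda\mapsto g\cdot GL_n(\mathcal{O})\in \mathrm{Gr}_G^\lambda$ is well defined; the invariant factors $\lambda$ are uniquely determined by $\Lambda$ by the structure theorem, so $\Lambda$ indeed lands in the correct stratum.

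Finally I would check that the two constructions are mutually inverse: the composite $\Lambda\mapsto g\mapsto g\cdot \mathcal{O}^n$ returns $\Lambda$ by construction, and for the other composite, if $g_1\cdot \mathcal{O}^n = g_2\cdot \mathcal{O}^n$ inside $K^n$ then $g_1^{-1}g_2$ preserves $\mathcal{O}^n$, hence lies in $GL_n(\mathcal{O})$, so $g_1$ and $g_2$ define the same class in $\mathrm{Gr}_G$. The main obstacle is essentially packaging: keeping straight that the \emph{right}-quotient by $GL_n(\mathcal{O})$ is what makes $\Lambda_x$ well defined, while the \emph{left} $GL_n(\mathcal{O})$-invariance of the invariant factors is what ensures membership in the cell $\mathrm{Gr}_G^\lambda$ rather than any larger stratum. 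Nothing else is subtle, since the whole argument is just Smith normal form over a DVR dressed in Grassmannian notation.
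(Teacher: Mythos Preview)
Your proposal is correct and follows essentially the same approach as the paper's proof: both directions rest on the Cartan decomposition/Smith normal form over the DVR $\mathcal{O}$ to write a representative of $x$ as $k_1 L_\lambda$ with $k_1\in GL_n(\mathcal{O})$, and both identify $\Lambda_x$ with $k_1\cdot\bigoplus_i t^{\lambda_i}\mathcal{O}$. If anything, your write-up is more careful than the paper's about well-definedness and about checking that the two constructions are mutually inverse.
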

\begin{proof}
	Since each point $x\in\mathrm{Gr}_G^\lambda$ can be represented by an element of the form
	\[
		x = \alpha L_{\lambda},\; \alpha\in G(\mathcal{O})
	\]
    in ${G(\mathcal{O})}L_\lambda{G(\mathcal{O})}$, the column vectors of $L_\lambda =\diag\left(t^{\lambda_1},\ldots,t^{\lambda_n}\right)$ generates a sublattice
	\[
		\alpha^{-1}\Lambda_x = t^{\lambda_{1}}\mathcal{O}\oplus\ldots\oplus t^{\lambda_{n}}\mathcal{O}
	\]
	of $\alpha^{-1}\mathcal{O}^n$. The quotient $\alpha^{-1}\left(\mathcal{O}^n/\Lambda_x\right) \cong \mathcal{O}^n/\Lambda_x$ is a module which satisfies the requirement of the lemma. Conversely, if there is a lattice $\Lambda = \bigoplus_i\mathcal{O}f_i$ satisfying the requirement of the lemma, the isomorphism in the statement of the lemma will give rise to an isomorphism between lattices
	$
		p:\bigoplus_j\mathcal{O}t^{\lambda_j}\rightarrow \Lambda =\bigoplus_i\mathcal{O}f_i 
	$
	such that 
	$
		f_j = \sum_i r_{ji} t^{\lambda_j}
	$
	where the matrix $\alpha = (r_{ji})$ is an invertible matrix in $G(\mathcal{O})$. Setting $x = \alpha L_\alpha$, then we can see that $\Lambda = \Lambda_x$ as defined in the statement of the lemma.
\end{proof}

In order for us to describe the set of extensions of lattices as mentioned in the beginning of Section \ref{prelimaffine}, we need to introduce the \emph{twisted product} $\mathrm{Gr}_G\widetilde{\times}\mathrm{Gr}_G$ of two copies of the affine Grassmannians.
Letting $G(\mathcal{O})$ act on $G(K)\times \mathrm{Gr}_G$ by $k\cdot(a,b)=(ak,k^{-1}b)$, the space $\mathrm{Gr}_G\widetilde{\times}\mathrm{Gr}_G$ is defined as the orbits in $G(K)\times \mathrm{Gr}_G$ of such an action by $G(\mathcal{O})$, with the quotient map by the action given by $q$:
\begin{equation}\label{twistedprod}
    \xymatrix{
     &G(K)\times \mathrm{Gr}_G\ar[ld]_p\ar[rd]^q&\\
     \mathrm{Gr}_G\times \mathrm{Gr}_G && \mathrm{Gr}_G\widetilde{\times} \mathrm{Gr}_G
    }
\end{equation}
The morphism $p$ is the quotient by $G(\mathcal{O})$ acting on the first entry. There is a 
multiplication map $m:\mathrm{Gr}_G\tilde{\times}\mathrm{Gr}_G\rightarrow \mathrm{Gr}_G$ defined by
\[
    m: (g,x)\mapsto gx.
\]
The reason why this map $m$ is called the multiplication map will be explained in Lemma \ref{Lattice2}.\\

\indent The twisted product $\widetilde{\times}$ can be defined similarly for the Schubert cells and Schubert varieties by simply taking the quotient space by the $G(\mathcal{O})$-action on the direct product of the corresponding Schubert cells and Schubert varieties. When restricted to the twisted product of the Schubert varieties, the multiplication map $m$ maps $\overline{\mathrm{Gr}_G^{\alpha}\widetilde{\times }\mathrm{Gr}_G^{\beta}}$ surjectively onto $\overline{\mathrm{Gr}_G^{\alpha+\beta}}$. \\
\indent We can also take the twisted product of multiple affine Grassmannians and Schubert varieties. Denote by $\mathrm{Gr}_{G}^{\lambda^{1},\ldots,\lambda^{n}}$ the multiple twisted product
\[
    \mathrm{Gr}_{G}^{\lambda^{1},\ldots,\lambda^{n}} = \mathrm{Gr}_{G}^{\lambda^1}\widetilde{\times}\ldots \widetilde{\times}\mathrm{Gr}_{G}^{\lambda^n}.
\]
The variety $\overline{\mathrm{Gr}_{G}^{\lambda^{1},\ldots,\lambda^{n}}}$ is stratified by the twisted product of Schubert cells corresponding to smaller partitions:
\[
    \overline{\mathrm{Gr}_{G}^{\lambda^{1},\ldots,\lambda^{n}}} = \bigcup_{\mu^i\leq \lambda^i} \mathrm{Gr}_G^{\mu^1,\ldots,\mu^n}.
\]
To count the dimension of the twisted product of these Schubert cells, we need the following lemma from \cite{XinwenZhu} and \cite{BaumannRiche}:
\begin{lemma}
    The dimension of the twisted product of cells $\mathrm{Gr}_{G}^{\lambda^{1},\ldots,\lambda^{n}}$ is equal to $\langle 2\rho,\sum_{i=1}^n\lambda^i\rangle$. The dimension of the fiber $m^{-1}(x)\cap \mathrm{Gr}_{G}^{\lambda^{1},\ldots,\lambda^{n}}$ of $m$ over any generic point $x\in \mathrm{Gr}_G^{\lambda}\subset\mathrm{Gr}_G$ is less than or equal to $\langle \rho,\sum_{i=1}^n\lambda^i-\lambda\rangle$.
\end{lemma}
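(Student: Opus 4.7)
The plan is to prove the dimension formula and the fiber bound separately; once both are in hand, the semi-smallness of $m$ follows immediately by doubling the fiber bound and comparing it with the codimension of $\mathrm{Gr}_G^\lambda$ in $\overline{\mathrm{Gr}_G^{\sum\lambda^i}}$.

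For the total dimension of $\mathrm{Gr}_G^{\lambda^1,\ldots,\lambda^n}$, I would induct on $n$. The base case $n=1$ is the well-known identity $\dim\mathrm{Gr}_G^{\lambda}=\langle 2\rho,\lambda\rangle$, which follows from the description of $\mathrm{Gr}_G^\lambda$ as the $G(\mathcal{O})$-orbit of $L_\lambda$ together with a count of the affine roots that act non-trivially on $L_\lambda$ modulo $G(\mathcal{O})$; this is spelled out in \cite[Proposition 1.3.2]{BaumannRiche}. For the inductive step, diagram (\ref{twistedprod}) exhibits $q$ as a $G(\mathcal{O})$-torsor whose source is, through $p$, a trivial $G(\mathcal{O})$-bundle over the first factor. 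Restricting to Schubert strata, the twisted product $\mathrm{Gr}_G^{\lambda^1}\widetilde{\times}\mathrm{Gr}_G^{\lambda^2,\ldots,\lambda^n}$ is étale-locally the ordinary product $\mathrm{Gr}_G^{\lambda^1}\times\mathrm{Gr}_G^{\lambda^2,\ldots,\lambda^n}$, so the dimensions add to $\langle 2\rho,\sum\lambda^i\rangle$.

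For the generic fiber bound, the strategy is once again induction on $n$, with the reduction achieved by factoring $m$ as
\[
\mathrm{Gr}_G^{\lambda^1,\ldots,\lambda^n}\xrightarrow{\mathrm{id}\,\widetilde{\times}\,m'}\mathrm{Gr}_G^{\lambda^1}\widetilde{\times}\overline{\mathrm{Gr}_G^{\lambda^2+\cdots+\lambda^n}}\xrightarrow{m_2}\overline{\mathrm{Gr}_G^{\sum\lambda^i}},
\]
so that a fiber of $m$ over a generic $x\in\mathrm{Gr}_G^\lambda$ sits as a fibration whose base is a fiber of $m_2$ and whose typical fiber is a fiber of $m'$; the inductive hypothesis handles the second piece, and the two-factor bound handles the first. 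The $n=2$ base case is the crux: after fixing a lift $\tilde{x}\in G(K)$ of $x$, the fiber can be identified with the locally closed subvariety $\{y\in\mathrm{Gr}_G^{\lambda^1}:y^{-1}\tilde{x}\in\mathrm{Gr}_G^{\lambda^2}\}$ of $\mathrm{Gr}_G^{\lambda^1}$, and the required inequality $\langle\rho,\lambda^1+\lambda^2-\lambda\rangle$ becomes the classical Mirković--Vilonen estimate on the dimension of the intersection $S_\mu\cap\overline{\mathrm{Gr}_G^{\lambda^1}}$ of a Schubert variety with a semi-infinite orbit, proved in \cite[Section 3]{MirkovicVilonen}.

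The main obstacle I anticipate is precisely this $n=2$ bound: it is not a formal consequence of the torsor structure of the twisted product, but rather a genuine MV-theoretic statement, requiring the semi-infinite cell decomposition of $\overline{\mathrm{Gr}_G^{\lambda^1}}$ and an analysis of how this decomposition interacts with the orbit of $\tilde{x}$. Once that bound is in place, the inductive step is routine, and semi-smallness of $m$ is immediate from the computation $2\langle\rho,\sum\lambda^i-\lambda\rangle=\langle 2\rho,\sum\lambda^i\rangle-\langle 2\rho,\lambda\rangle$, which is exactly the codimension of $\mathrm{Gr}_G^\lambda$ inside $\overline{\mathrm{Gr}_G^{\sum\lambda^i}}$.
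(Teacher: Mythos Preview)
Your proposal is correct and follows essentially the same route as the paper: the paper computes $\dim\mathrm{Gr}_G^\lambda$ by a stabilizer/tangent-space argument at $L_\lambda$ (your $n=1$ base case), asserts that the twisted-product dimension follows from the same kind of argument (your \'etale-local product inductive step), and for the fiber bound simply cites \cite[Lemma 4.4]{MirkovicVilonen} and \cite[Lemma 6.4]{BaumannRiche}, which is exactly the Mirkovi\'c--Vilonen input you invoke. One small caveat: in your $n=2$ step the fiber is literally $\mathrm{Gr}_G^{\lambda^1}\cap L_\lambda\cdot\mathrm{Gr}_G^{-w_0\lambda^2}$, an intersection with a \emph{translated Schubert cell} rather than directly with a semi-infinite orbit $S_\mu$, so the reduction to the MV estimate on $S_\mu\cap\overline{\mathrm{Gr}_G^{\lambda^1}}$ requires the extra stratification step carried out in \cite[Lemma 4.4]{MirkovicVilonen}; this is not a gap, but you should phrase it as ``follows from'' rather than ``becomes''.
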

\begin{proof}
    Since the stablizer of the action of $G(\mathcal{O})$ at $L^\lambda$ is $G(\mathcal{O})\cap L_\lambda G(\mathcal{O}) (L_\lambda)^{-1}$, thus the tangent space $\mathrm{Gr}_G^\lambda$ at $L_{\lambda}$ is
    \[
        \frakg(\mathcal{O})/\frakg(\mathcal{O})\cap \mathrm{Ad}_{L_\lambda} \frakg(\mathcal{O})\cong \bigoplus_{\langle\alpha,\lambda\rangle\geq 0}\frakg_\alpha(\mathcal{O})/t^{\langle\alpha,\lambda\rangle}\frakg(\mathcal{O}).
    \]
    The dimension of this tangent space above is equal to $\langle 2\rho,\lambda\rangle$. The dimension of the twisted product follows from a similar stabilizer argument. The inequality for the dimension of the fiber $m^{-1}(x)\cap \mathrm{Gr}_{G}^{\lambda^{1},\ldots,\lambda^{n}}$ follows from \cite[Lemma 4.4]{MirkovicVilonen} and also \cite[Lemma 6.4]{BaumannRiche}.
\end{proof}
\begin{remark}\label{dimensionrmk}
    For any partition $\lambda$, the number $\langle\rho,\lambda\rangle$ is closely related to $\frac{1}{2}\left\|\lambda^t\right\|^2 = \frac{1}{2}\sum \left(\lambda^t_i\right)^2$. Since we know
    \[
        \lambda_i-\lambda_{i+1} = \#\left|\left\{j\mid \lambda^t_j=i \right\}\right|,
    \]
    we can express $\frac{1}{2}\left\|\lambda^t\right\|^2$ in terms of $\lambda$:
    \[
        \frac{1}{2}\left\|\lambda^t\right\|^2 = \sum_{i=1}i^2(\lambda_i-\lambda_{i+1}).
    \]
    Through simple calculation, we see that
    \[
        \frac{1}{2}\left\|\lambda^t\right\|^2+\langle\rho,\lambda\rangle = \left(n-\frac{1}{2}\right)\sum_{k=1}^n \lambda_k.
    \]
    Therefore, we can express the dimension of the fiber $m^{-1}(x)\cap \mathrm{Gr}_{G}^{\lambda^{1},\ldots,\lambda^{n}}$ differently in terms of the conjugate partitions:
    \begin{equation}
        \dim \left(m^{-1}(x)\cap \mathrm{Gr}_{G}^{\lambda^{1},\ldots,\lambda^{n}}\right) = \frac{1}{2}\left(\left\|\lambda^t\right\|^2-\sum_i\left\|\lambda_i^t\right\|^2\right).\label{dimensionnew}
    \end{equation}
\end{remark}
We can use the variety $\mathrm{Gr}_G^{\alpha}\widetilde{\times }\mathrm{Gr}_G^{\beta}$ to parametrize the extensions of lattices (as promised in the second picture mentioned in the beginning of Section \ref{affinegrassmannian}) in $\mathcal{O}^n$. The correspondence is presented in the following lemma (recalling the definition of the $\mathcal{O}$-module $M_\alpha$ in (\ref{defm})): 
\begin{lemma}\label{Lattice2}
The variety $\mathrm{Gr}_G^{\alpha}\tilde{\times }\mathrm{Gr}_G^{\beta}$ parametrizes the collection of rank-$n$ sublattices $\Lambda_{\gamma}\subset \Lambda_{\alpha}\subset \mathcal{O}^{n}$, with $\gamma$ a partition satisfying $|\gamma| = |\alpha| + |\beta|$, such that
\begin{enumerate}
    \item $\mathcal{O}^n/\Lambda_\gamma\cong M_\gamma$, and $\mathcal{O}^n/\Lambda_\alpha\cong M_\alpha$,
    \item $\Lambda_{\alpha}/\Lambda_\gamma\cong M_{\beta}$.
\end{enumerate}
The multiplication map $m: \mathrm{Gr}_G^{\alpha}\tilde{\times }\mathrm{Gr}_G^{\beta}\rightarrow \overline{\mathrm{Gr}_G^{\alpha+\beta}}$ sends any pair of such lattices $(\Lambda_{\gamma},\Lambda_{\alpha})$ to $\Lambda_{\gamma}$.
\end{lemma}
\begin{proof}
    The scheme $\mathrm{Gr}_G^{\alpha}\widetilde{\times }\mathrm{Gr}_G^{\beta}$ is a $G(\mathcal{O})$-quotient of the product $G(\mathcal{O})L_{\alpha}G(\mathcal{O})\times G(\mathcal{O})L_{\beta}$, which parametrizes the pairs of elements $(l_1,l_2)$ with $l_2\in G(\mathcal{O})L_{\beta}$ and $l_1\in G(\mathcal{O})L_{\alpha}G(\mathcal{O})$. Each orbit of the action by $G(\mathcal{O})$ has a unique representative of the form $(x L_{\alpha},y L_{\beta})$, and lies in the same $G(\mathcal{O})$-orbit as the representative $(xL_{\alpha}y,L_{\beta})$. In the lattice $x\mathcal{O}^n$, one can consider the lattice generated by the column vectors of the matrix $xL_{\alpha}y L_{\beta}$. We would like to prove that this lattice is the correct $\Lambda_{\gamma}$ in the statement of this lemma.\\
    \indent The lattice $\Lambda_\alpha$ is generated by the column vectors of $xL_\alpha$. In the lattice $x^{-1}\Lambda_\alpha$, we can write the diagonal matrix $L_\alpha = \left(t^{\alpha_1},\ldots,t^{\alpha_n}\right)$ in the form
    \[
        L_{\alpha} = (e_1,\ldots,e_n),
    \]
    where each $e_i$ is a column vector $e_i = (0,\ldots,t^{\alpha_i},\ldots,0)^t$. Rewriting the matrix $y L_{\beta}$ as $y L_{\beta} = (g_{i,j})$, the column vectors of the matrix $L_{\alpha}y L_{\beta}$ are thus given by \[v_j = (t^{\alpha_i}g_{i,j}) = \sum_{i}g_{i,j}e_{i}.\]
    The vectors $v_j$ generate a sublattice $x^{-1}\Lambda_\gamma$ of $x^{-1}\Lambda_\alpha = \sum_i\mathcal{O}e_i$ which satisfies $\Lambda_{\alpha}/\Lambda_\gamma\cong M_{\beta}$.\\
    \indent Conversely, for each pair of rank-$n$ lattices $(\Lambda_\gamma,\Lambda_\alpha)$ satisfying the conditions of the lemma, since by Lemma \ref{Lattice1} there exist an element $x\in \mathrm{Gr}_G^\alpha$ such that $\Lambda_\alpha = \Lambda_x$. By Bruhat decomposition, $x$ can be represented by a matrix $x'L_{\alpha}$ with $x'\in G(\mathcal{O})$ with $\Lambda_x$ generated by the column vectors of $x'L_{\alpha}$. Also under the isomorphism $\mathcal{O}^n\cong\Lambda_x$ sending each lattice element represented by a column vector $z$ to $x'L_\alpha z$, there also exists an element $y\in \mathrm{Gr}_G^\beta$ represented by $y'L_{\beta}$ such that $x'L_\alpha y'L_\beta$ generates $\Lambda_\gamma$.
\end{proof}

\subsection{Proof of the Main Lemma}

For a generic $x\in \mathrm{Gr}_G^\gamma$, the properties of the fiber $m^{-1}(x)\cap \mathrm{Gr}_G^{\alpha,\beta}$ of the multiplication map is studied in \cite{haines2003structure} and \cite{haines2006equidimensionality} as corollaries of the Geometric Satake Isomorphism. By the representation theory of finite dimensional representations of $GL(n)$, the multiplicity of the finite representation $V^\gamma$ with highest weight $\gamma$ has multiplicity $c_{\alpha,\beta}^\gamma$ in $V^\alpha\otimes V^\beta$. By \cite[Theorem 2.2]{haines2006equidimensionality}, 
\begin{align}\label{satakeformula}
    c_{\alpha,\beta}^\gamma = 
    \text{\# irreducible components of }m^{-1}(x)\cap \mathrm{Gr}_G^{\alpha,\beta}\text{ with dimension }\langle\rho,\alpha+\beta-\gamma\rangle.
\end{align}
for a generic $x\in\mathrm{Gr}_G^\gamma$.

Lemma \ref{mainlemma} follows as a corollary of this result:
\begin{proof}[Proof of Lemma \ref{mainlemma}]
By Lemma \ref{Lattice1}, any element $x\in \mathrm{Gr}_G^\gamma$ represents a lattice $\Lambda_{x}$ such that the quotient $\mathcal{O}^n/\Lambda_x$ has Jordan type $\gamma$. If the multiplication map
\[
    m:\mathrm{Gr}_G^\alpha\widetilde{\times}\mathrm{Gr}_G^\beta\rightarrow\overline{\mathrm{Gr}_G^{\alpha+\beta}}
\]
sends a pair $y = (\Lambda_\gamma,\Lambda_\alpha)$ to $\Lambda_x$, by Lemma \ref{Lattice2} we must have $\Lambda_\gamma = \Lambda_x$. 
Since by (\ref{satakeformula}), on each Schubert cycle $\mathrm{Gr}_G^\gamma$, the group $G(K)$ acts transitively, the preimage of the Schubert cycle $m^{-1}(\mathrm{Gr}_G^\gamma)$ intersects $\mathrm{Gr}^{\alpha,\beta}_G$ if and only if the fiber $m^{-1}(x)$ intersects $\mathrm{Gr}^{\alpha,\beta}_G$. 
Since all the dominant coweights of $GL_n$ are sums of coweights of the form $\epsilon^i = \left(1^{i},0^{n-i}\right)$, which are all \emph{minuscule}, i.e. $\langle\epsilon_i,\alpha\rangle\in\{-1,0,1\}$ for any root $\alpha$. By \cite[Theorem 1.3]{haines2006equidimensionality}, the irreducible components of $m^{-1}(x)\cap \mathrm{Gr}^{\alpha,\beta}_G$, if not empty, are equidimensional and of dimension $\langle\rho,\alpha+\beta-\gamma\rangle$. Since the Littlewood-Richardson coefficient $c_{\alpha,\beta}^\gamma$ is equal to the number of irreducible components of $m^{-1}(x)\cap \mathrm{Gr}^{\alpha,\beta}_G$, we have now proven that $m^{-1}(x)\cap \mathrm{Gr}^{\alpha,\beta}_G\neq\emptyset$ if and only if $c_{\alpha,\beta}^\gamma\neq 0$.
\end{proof}

\section{Interpretations of the Littlewood-Richardson Coefficient}\label{nilpchapter}
In this section, we consider the Young diagrams with shapes $\alpha,\beta\subset \gamma$ such that $c_{\alpha,\beta}^{\gamma}\neq 0$. We will fix our notations to denote the length of $\gamma$ by $m$, the length of $\alpha$ by $n$. The goal of this section is to prove the second part of the Theorem \ref{maintheorem}, which follows from the geometric interpretation of the Littlewood-Richardson coefficients studied by Springer in \cite{SpringerConstruction} and Marc van Leeuwen in \cite{VanLeeuwen}. This geometric interpretation of $c_{\alpha,\beta}^\gamma$ is our fourth picture mentioned in the beginning of Section \ref{prelimaffine}.
\subsection{The Satake Fiber}
The fiber $\mathcal{S}^\gamma_{\alpha,\beta}(x) = m^{-1}(x)\cap \mathrm{Gr}^{\alpha,\beta}_G$ of the multiplication map $m$ above the point $x\in\mathrm{Gr}_\gamma$ is sometimes referred to as the \emph{Satake fiber}. The following lemma identifies the Satake fiber as a subvariety of the partial flag variety $G/P_{[n,m-n]}$:
\begin{lemma}\label{fiberisom}
    Fixing a vector space $W$ of dimension $|\alpha|+|\beta|$ and a nilpotent matrix $x$ acting on $W$ with Jordan type $\gamma$, the Satake fiber $\mathcal{S}^\gamma_{\alpha,\beta}(x)$ parametrizes all subspaces $V$ of dimension $|\alpha|$ inside $W$ with the following properties:
    \begin{enumerate}
        \item $V$ is stabilized by $x$,
        \item $x\mid_V$ has Jordan type $\alpha$,
        \item $x$ acts on $W/V$ with Jordan type $\beta$.
    \end{enumerate}
\end{lemma}
\begin{proof}
    The Satake fiber $m^{-1}(L_\gamma)\cap \mathrm{Gr}^{\alpha,\beta}_G$ parametrizes the pairs of lattices $(\Lambda_\gamma,\Lambda_\alpha)$ satisfying the conditions in Lemma \ref{Lattice2}. The quotients $M_\alpha,M_\beta,M_\gamma$ of $\mathcal{O}^n$ by the three lattices $\Lambda_\alpha,\Lambda_\beta$ and $\Lambda_\gamma$, respectively, fit into the exact sequence
    \[
        0\rightarrow M_\alpha\rightarrow M_\gamma\rightarrow M_\beta\rightarrow 0.
    \]
    The multiplication by the variable $t$ of the ring $\mathcal{O}[\![t]\!]$ is a nilpotent linear operator with Jordan type $\alpha,\beta,\gamma$ on the three spaces, respectively. Conversely, if we have a pair of spaces $V\subset W$ satisfying the conditions of this lemma, $V$ and $W$ can be viewed as the $\mathcal{O}$-modules $M_\alpha,M_\gamma$, respectively.
\end{proof}
\subsection{The Springer-type Fibers}
\[
    T^*\mathcal{B} = \left\{(x,gB)\mid x\in \mathrm{Ad}(g)\mathfrak{b}\right\}\subset \mathcal{N}\times \mathcal{B}.
\]
For each nilpotent orbit $\mathcal{O}_\gamma\subset\mathcal{N}\subset\mathfrak{g}$, the projection $p$ from the cotangent bundle $T^*\mathcal{B}$ to $\mathcal{B}$ is a resolution of singularities called the \emph{Springer resolution} for the nilpotent cone $\mathcal{N}$:
\[
    \xymatrix{ & T^*\mathcal{B}\subset \mathcal{N}\times\mathcal{B}\ar[dl]_{\pi}\ar[dr]^{p}\\
\mathcal{N} &  & \mathcal{B}
}
\]
where $\pi,p$ are the projections onto the first and the second factor, respectively. The \emph{Springer fiber} $\mathcal{B}_\gamma(x)$ above the point $x\in\mathcal{O}_\gamma$ of $p$ is the collection of Borel subalgebras $\mathfrak{b}\subset\frakg$ containing the nilpotent element $x$
\[
    \mathcal{B}_\gamma(x) = \left\{(x,gB)\mid x\in \mathrm{Ad}(g)\mathfrak{b}\right\}.
\]
It is proven by Spaltenstein in \cite{Spaltenstein} and Steinberg in \cite{Steinberg1} and \cite{Steinberg2} that the irreducible components of the Springer fiber $\mathcal{B}_\gamma(x)$ are equidimensional and are parametrized by the \emph{standard Young tableaux} of shape $\gamma$. The number of standard Young tableaux can be calculated from the hook-length formula.\\
\indent For $G=GL_m$, the Springer fiber $\mathcal{B}_\gamma(x)$ can be interpretd as the collection of $x$-stable full flags
\[
    \{0\} = V_0\subset V_1\subset V_2\subset\ldots\subset V_m = V
\]
in a vector space $V_m$ of dimension $m$, on which $x$ acts as a nilpotent linear transformation. Since the Springer fibers are isomorphic for different choices of $x$ in the same orbit $\mathcal{O}_\mu$, we can denote the Springer fiber by $\mathcal{B}_\gamma$ if there is no ambiguity caused by different choices of $x$.\\

\indent Apart from the case of the full flag variety $G/B$, we can also consider the Springer-type partial resolution from the cotangent bundle of a partial flag variety $G/P$ corresponding to the parabolic subgroup $P=LN$. Similar to the Springer resolution of the nilpotent cone by the cotangent bundles of a full flag variety, given a parabolic subgroup $P$, we can define a \emph{partial resolution} $\widetilde{\mathcal{N}_P}$ of the nilpotent cone $\mathcal{N}$
\[
    \widetilde{\mathcal{N}_P} = \left\{(x,gP)\mid x\in \mathcal{N}\cap \mathrm{Ad}(g)\frakp\right\}\subset \mathcal{N}\times G/P\xrightarrow{\pi_P}\mathcal{N}
\]
which factorizes the Springer resolution $\pi$. The \emph{generalized Springer fiber} $\pi_P^{-1}(x)$ above any element $x$ is sometimes referred to as the \emph{Steinberg variety} (cf. \cite[Chapter 3]{borho101partial}, some references also call it the \emph{Spaltenstein variety}). When $P=B$, the partial resolution $\pi_P$ becomes the usual Springer resolution, and the fiber $\pi^{-1}_P(x)$ becomes the usual Springer fiber above $x$. Equivalently, if we consider the parabolic subgroup $P$ as the stabilizer of a partial flag
\[
    V_{k_1}\subset V_{k_2} \subset \ldots\subset V_{k_r}=V_m
\]
where each $V_{k_i}$ is a subspace of dimension $k_i$. The fiber $\pi_P^{-1}(x)$ can be identified as the collection of the $x$-stable flags which are stabilized by a conjugate of $P$.\\

\indent We are particularly interested in the standard parabolic subgroups $P_{[n,1^{m-n}]}$ with a Levi subgroup $L_{[n,1^{m-n}]}\cong GL(n)\times GL(1)^{m-n}$, and $P_{[n,m-n]}$ with Levi subgroup $L_{[n,m-n]}\cong GL(n)\times GL(m-n)$. For the parabolic subgroup $P_{[n,1^{m-n}]}$, the generalized Springer fiber above $x\in\mathcal{O}_\gamma$, denoted by $\mathcal{P}_{[n,1^{m-n}]}(x)$, is the set of $x$-stable partial flags $V_{n} \subset \ldots \subset V_{m-1}\subset V_{m}$
in an $m$-dimensional vector space $V_m$ with the lowest piece $V_n$ having dimension $n$, and any two consecutive spaces satisfying $\mathrm{dim}(V_{i+1}/V_{i})=1$. The fiber $\mathcal{P}_{[n,1^{m-n}]}(x)$ can be identified as the set
\[
    \mathcal{P}_{[n,1^{m-n}]}(x) = \{(x,gP_{[n,1^{m-n}]})\mid \mathrm{Ad}(g)^{-1}x\in \mathfrak{p}_{[n,1^{m-n}]}\}.
\]
Similarly, for the parabolic subgroup $P_{[n,m-n]}$, the generalized Springer fiber above $x\in \mathcal{O}_\gamma$, denoted similarly by $\mathcal{P}_{[n,m-n]}(x)$, is a subvariety of $G/P_{[n,m-n]}$ given by
\[
    \mathcal{P}_{[n,m-n]}(x) = \{(x,gP_{n,m-n})\mid \mathrm{Ad}(g)^{-1}x\in \mathfrak{p}_{[n,m-n]}\}.
\]
\subsection{Springer Fibers, Jordan Forms, and the Proof of Part 2 of Theorem \ref{maintheorem}}

\indent For any nilpotent element $x\in\mathcal{O}_\gamma$, there is a subvariety $\mathcal{G}_{n}^m(x)$ of the Grassmannian variety $G(n,m)$ that parame\-trizes the $x$-stable subspaces of $V_{m}$ of dimension $n$. By the argument in the Section 4 of \cite{VanLeeuwen} and Lemma \ref{fiberisom}, this subvariety $\mathcal{G}_{n}^m(x)$ can be decomposed into the union of Satake fibers $\mathcal{S}^\gamma_{\alpha,\beta}(x)$ which parametrizes all $V_n\subset V_m$ of dimension $n$ such that the Jordan type of $x\mid _{V_{n}}$ on $V_{n}$ is $\alpha$, with the Jordan type of $x'$, the nilpotent linear operator acting on the quotient space $V_m/V_{n}$ induced from $x$, equal to $\beta$:
\[
    \mathcal{G}_{n}^m(x) = \bigcup_{\substack{|\alpha|=n\\|\gamma|=m\\\alpha,\beta\subset\gamma}}\mathcal{S}_{\alpha,\beta}^\gamma(x).
\]
\indent Moreover, the action of the nilpotent element $x$ with Jordan type $\beta$ on the space $V_m/V_n$ corresponds to an $x$-stable filtration
\[
    V_n\subset V_{n+1}\subset\ldots\subset V_m
\]
such that the action of $x$ on each quotient $V_{n+i}/V_{n}$ has Jordan form $\alpha_i$. By the correspondence between flags and parabolic subgroups, there is a bijection between any Satake fiber $\mathcal{S}_{\alpha,\beta}^\gamma(x)$ and the set
\[
    \left\{gP \mid \mathrm{Ad}(g)^{-1}x\in \frakp_{[n,m-n]},\text{ and }p_\frakl(\mathrm{Ad}(g)^{-1}x)\in\mathcal{O}_{\alpha,\beta}\right\}
\]
where $p_\frakl:\mathfrak{p_{[n,m-n]}}\rightarrow\mathfrak{p_{[n,m-n]}}/\fraku_{[n,m-n]}\cong\frakl_{[n,m-n]}$ is the projection map. Letting $(\alpha,\beta)$ go over all the possible pairs of Young diagrams satisfying $|\alpha|+|\beta| = |\gamma|$, the union of the spaces $\mathcal{S}_{\alpha,\beta}^\gamma(x)$ is exactly the generalized Springer fiber $\mathcal{P}_{[n,m-n]}(x)$ over a point $x\in\mathcal{O}_\gamma$ of the partial flag variety $G/P_{[n,m-n]}$. \\

\indent Now we prove the second part of the Theorem \ref{mainlemma}:
\begin{proof}
If $x\in\mathcal{O}_\gamma$ and $gP_{[n,m-n]}\in \mathcal{S}_{\alpha,\beta}^\gamma(x)$ such that $\mathrm{Ad}(g)^{-1}x\in\frakp_{[n,m-n]}\cap \mathcal{O}_{\gamma}$. The space $\mathcal{S}_{\alpha,\beta}^\gamma(x)$ is a closed subvariety of $G/P_{[n,m-n]}$ that parametrizes the parabolic subalgebras $\mathfrak{q}=\frakm+\mathfrak{u}$ conjugate to $\frakp_{[n,m-n]}$ such that
\[
    x\in \mathfrak{q}\text{ and } p_\mathfrak{m}\left(x\right)\in \mathcal{O}_{\alpha,\beta}.
\]
In \cite{SpringerConstruction}, \cite{VanLeeuwen} and the proof of the Lemma \ref{mainlemma}, it is stated that the dimensions of the irreducible components of  the Satake fiber $S^\gamma_{\alpha,\beta}(x)$ are equal to $\langle\rho,\alpha+\beta-\gamma\rangle$, and the number of such components is equal to the Littlewood-Richardson coefficient $c^\gamma_{\alpha,\beta}$.\\
\indent Consider the following partial Springer resolution $\pi_P$:
\[
    \pi_{P_{[n,m-n]}}:\widetilde{\mathcal{N}}_P\subset \mathcal{N}\times G/P_{[n,m-n]}\longrightarrow \mathcal{N},
\]
there exists a $G$-equivariant isomorphism from $G\times_{P_{[n,m-n]}} \frakp_{[n,m-n]}$ to $\widetilde{\mathcal{N}}_P$ given by the map $(g,y)\leftrightsquigarrow (\mathrm{Ad}(g)y,[gP_{[n,m-n]}])$. We denote the projection map from $G\times \frakp_{[n,m-n]}$ to $G\times_{P_{[n,m-n]}} \frakp_{[n,m-n]}$ by $\varpi$. The Satake fiber $\mathcal{S}_{\alpha,\beta}^\gamma(x)$ over a point $x\in\mathcal{O}_\gamma$ is a subvariety of $G/P_{[n,m-n]}$, and has a preimage $\varpi^{-1}\left(\pi_2^{-1}(\mathcal{S}_{\alpha,\beta}^\gamma(x))\right)$ in $G\times \frakp_{n,m-n}$.  This preimage is the set of all pairs of the form $(g,y)$ such that $\mathrm{Ad}(g)^{-1}x = y$ with $g\in \mathcal{S}_{\alpha,\beta}^\gamma(x)$. The projection of this preimage to $\mathcal{N}$ along $\pi_{P_{[n,m-n]}}$ is denoted by $\mathcal{P}_{\alpha,\beta}^\gamma$, which is a subset of $\mathcal{O}_\gamma\cap \frakp_{[n,m-n]}$. The projection from the preimage $\varpi^{-1}\left(\pi_2^{-1}(\mathcal{S}_{\alpha,\beta}^\gamma(x))\right)$ along $\pi_{P_{[n,m-n]}}$ to $\mathcal{O}_\gamma \cap \mathfrak{p}_{[n,m-n]}$ is a quotient by the action of the centralizer $C_G(x)$.\\
\indent Since the projection map from $\varpi^{-1}\left(\pi_2^{-1}(\mathcal{S}_{\alpha,\beta}^\gamma(x))\right)$ to $\mathcal{S}_{\alpha,\beta}^\gamma(x)$ is locally trivial with fiber above each point isomorphic to $P_{[n,m-n]}$, by \cite[\href{https://stacks.math.columbia.edu/tag/037A}{Tag 037A}]{stacks-project}, the number of irreducible components of the preimage $\varpi^{-1}\left(\pi_2^{-1}(\mathcal{S}_{\alpha,\beta}^\gamma(x))\right)$ is still $c_{\alpha,\beta}^\gamma$
. By (\ref{dimensionnew}) stated in Remark \ref{dimensionrmk}, the dimension of any irreducible component of $\varpi^{-1}\left(\pi_2^{-1}(\mathcal{S}_{\alpha,\beta}^\gamma(x))\right)$ is equal to
\[
    \dim \varpi^{-1}\left(\pi_2^{-1}(\mathcal{S}_{\alpha,\beta}^\gamma(x))\right)=\frac{1}{2}\left(\sum_i(\gamma^t)_i^2-\sum_i(\alpha^t)_i^2-\sum_i(\beta^t)_i^2\right)+\left(\sum\alpha_i^t\right)^2+\left(\sum\beta_i^t\right)^2 + \left(\sum\alpha_i^t\right)\left(\sum\beta_i^t\right).
\]
In the $GL(n)$ case, the centralizer $C_G(x)$ has only one irreducible component with dimension $\sum_i(\gamma^t)_i^2$. 
Thus by \cite[\href{https://stacks.math.columbia.edu/tag/037A}{Tag 037A}]{stacks-project} again, the number of irreducible components of $\mathcal{P}_{\alpha,\beta}^\gamma$ is also $c_{\alpha,\beta}^\gamma$. Since we can interpret the set $\mathcal{P}_{\alpha,\beta}^\gamma$ as the elements in $\mathcal{O}_\gamma\cap\mathfrak{p_{[n,m-n]}}$ whose projection along $p_\frakl$ lies in $\mathcal{O}_{\alpha,\beta}$, we have completed the proof of the second part of the Theorem \ref{maintheorem}.
\end{proof}
\section{Applications: Wave Front Sets and Associated Varieties}\label{appl}
In this section, let $G$ be a semisimple algebraic group over an archimedean field $F$. We would like to introduce invariants to describe of the ``growth'' of irreducible representations $\pi$ of the group $G$. We denote the Lie algebra of $G$ by $\mathfrak{g}_0$, and its complexified Lie algebra by $\mathfrak{g}$.
\subsection{Associated Varieties and Annihilator Varieties}
For any irreducible representations $\pi$ of $G$, 
the ``growth'' of $\pi$ is recorded in the \emph{associated variety} $\mathrm{As}(\pi)$ and its \emph{annihilator variety} $\mathrm{An}(\pi)$. The algebra $U(\frakg)$ and its ideals are filtered by the degrees, and the \emph{annihilator variety} $\mathrm{An}(M)$ and the \emph{associated variety} $\mathrm{As}(M)$ of a module $M$ are defined as the zero sets of the ideal $\mathrm{Gr}\left(\mathrm{Ann}(M)\right)$ and $\mathrm{Ann}\left(\mathrm{Gr}(M)\right)$, respectively. For groups over $\RR$ and $\CC$, it turns out that the annihilator variety $\mathrm{An}(\pi)$ of any irreducible representation $\pi$ is the closure of an unique orbit $\mathcal{O}(\pi)$ (cf. \cite[3.10]{joseph1985associated}).

\subsection{Induction, Jacquet Functors and Associated Varieties}
In \cite{GourevitchSayag}, Gourevitch and Sayag discussed a family of Lagrangian submanifolds of the annihilator variaties of any irreducible $U(\frakg)$-module. According to \cite[Corollary 6.6]{GourevitchSayag}:
\begin{theorem}
    For any irreducible representation $\pi$ of $G$ and a parabolic subgroup $P=LN$, denoting by $r_P$ the Jacquet functor along the nilpotent radical $N$, then for any irreducible quotient $\tau$ of $r_P(\pi)$,
    \[
        \mathcal{O}(\tau)\subset p_{\frakl}(\mathcal{O}(\pi)\cap\frakn^\perp).
    \]
\end{theorem}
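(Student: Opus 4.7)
The plan is to pass to associated graded modules with respect to a good filtration on $\pi$ and track the resulting supports in $\frakg^*$ and $\frakl^*$. First I would choose a $K$-invariant good filtration $F_\bullet\pi$ by finite dimensional subspaces with $U_k(\frakg)\cdot F_0\pi = F_k\pi$, so that $\mathrm{gr}^F\pi$ becomes a finitely generated $\Sym(\frakg)$-module; by the theorem of Joseph cited above, its support in $\frakg^*$ is exactly $\mathrm{An}(\pi)=\overline{\mathcal{O}(\pi)}$.

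Next, the induced filtration $F_k r_P(\pi) := F_k\pi/(F_k\pi\cap\frakn\pi)$ on the Jacquet module makes $\mathrm{gr}^F r_P(\pi)$ into a quotient of $\mathrm{gr}^F\pi/\frakn\cdot\mathrm{gr}^F\pi$. The latter is the reduction of $\mathrm{gr}^F\pi$ modulo the ideal generated by $\frakn\subset\Sym(\frakg)$, so its scheme-theoretic support is contained in $\mathrm{An}(\pi)$ intersected with the vanishing locus of $\frakn$, i.e.\ with $\frakn^\perp\subset\frakg^*$. Since $\tau$ is an irreducible quotient of $r_P(\pi)$, filtering $\tau$ by the images of the $F_k r_P(\pi)$ realizes $\mathrm{gr}^F\tau$ as a quotient of $\mathrm{gr}^F r_P(\pi)$, and hence its $\Sym(\frakg)$-support also lies in $\mathrm{An}(\pi)\cap\frakn^\perp$.

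The third step is to re-interpret $\mathrm{gr}^F\tau$ as a $\Sym(\frakl)$-module. Using the decomposition $\frakg=\overline{\frakn}\oplus\frakl\oplus\frakn$ and the dual decomposition $\frakg^*=\overline{\frakn}^*\oplus\frakl^*\oplus\frakn^*$, the subspace $\frakn^\perp$ equals $\overline{\frakn}^*\oplus\frakl^*$, and $p_\frakl$ is precisely the projection onto the $\frakl^*$-summand. The $U(\frakl)$-action on $\tau$ induces on $\mathrm{gr}^F\tau$ the $\Sym(\frakl)$-action obtained by restriction through $\frakl\hookrightarrow\frakg$, so the $\Sym(\frakl)$-support of $\mathrm{gr}^F\tau$ is the image of its $\Sym(\frakg)$-support under $p_\frakl$. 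Combining this with the containment of the previous step gives the inclusion of this support inside $p_\frakl(\mathrm{An}(\pi)\cap\frakn^\perp)$, and applying Joseph's theorem again to the irreducible $\tau$ identifies the support with $\overline{\mathcal{O}(\tau)}$, which yields the claim.

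The hard part will be the third step, namely showing that the $\Sym(\frakl)$-support of $\mathrm{gr}^F\tau$ is actually the $p_\frakl$-image of its $\Sym(\frakg)$-support. In general $p_\frakl\colon\frakn^\perp\to\frakl^*$ is not proper, so images of closed subsets need not be closed, and the naive algebraic quotient $\pi/\frakn\pi$ need not be finitely generated as a $U(\frakl)$-module. The remedy is to work with the smooth (Casselman--Wallach) Jacquet module, which is known to be finitely generated over $(\frakl,L\cap K)$, so the induced filtration descends to a genuinely good filtration over $\Sym(\frakl)$ and the support-projection identification holds as required. This is the technical content handled in \cite{GourevitchSayag}.
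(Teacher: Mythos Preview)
The paper does not prove this statement; it is quoted as \cite[Corollary~6.6]{GourevitchSayag} without argument, so there is no in-paper proof to compare your sketch against. Your outline follows the standard filtration-and-support strategy, and you correctly isolate finite generation of the Jacquet module over $U(\frakl)$ as a genuine technical point, but two further gaps remain that are not subsumed by that issue.

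First, you identify the support of $\mathrm{gr}^F\pi$ with $\mathrm{An}(\pi)=\overline{\mathcal{O}(\pi)}$; in fact that support is the \emph{associated} variety $\mathrm{As}(\pi)$, which for a Harish--Chandra module is only a $K_\CC$-stable Lagrangian inside $\mathrm{An}(\pi)$ and is typically strictly smaller. Joseph's theorem concerns $\mathrm{An}$, not $\mathrm{As}$. So what your argument actually bounds is $\mathrm{As}(\tau)$, and to reach a statement about $\mathcal{O}(\tau)$ you must additionally invoke that $\mathrm{An}(\tau)$ is the $L_\CC$-saturation of $\mathrm{As}(\tau)$ and check that the target set $p_\frakl\bigl(\overline{\mathcal{O}(\pi)}\cap\frakn^\perp\bigr)$ is $L$-stable. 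Second, even after this repair your argument only yields $\mathcal{O}(\tau)\subset p_\frakl\bigl(\overline{\mathcal{O}(\pi)}\cap\frakn^\perp\bigr)$, with the closure on the right; the theorem is stated for the open orbit $\mathcal{O}(\pi)$ itself. Replacing $\overline{\mathcal{O}(\pi)}$ by $\mathcal{O}(\pi)$ is not automatic, since boundary orbits also meet $\frakn^\perp$ and contribute to the projection; one needs a separate constructibility or dimension argument to see that the locally closed $\mathcal{O}(\tau)$ is already hit by $p_\frakl(\mathcal{O}(\pi)\cap\frakn^\perp)$. Both points are part of what \cite{GourevitchSayag} actually proves.
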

Under the duality given by the Killing form, $\frakg^*$ and $\frakg$ are isomorphic, and the following spaces correspond under such isomorphism:
\begin{center}
    \begin{tabular}{c|c}
        $\frakg$ & $\frakg^*\cong\frakg$\\\hline\hline
        $\frakn^\perp$ & $\frakp$\\\hline
        projection $p_\frakl:\frakp\rightarrow\frakp/\frakn$ & projection $q_\frakl:\frakn^\perp\rightarrow\frakn^\perp/\frakp^\perp$\\\hline
        $\mathcal{O}(\pi)\cap \frakn^\perp$ & $\mathcal{O}(\pi)\cap \frakp$\\\hline
        $\frakn^\perp\cap p_\frakl^{-1}\mathcal{O}(\tau)$ & $\mathcal{O}(\tau)+\frakn$.
    \end{tabular}
\end{center}
Therefore, under the Killing form duality, the following two inclusions are equivalent:
\[
     \mathcal{O}(\tau)\subset p_\frakl \left( \mathcal{O}(\pi)\cap \frakn^\perp\right)\leftrightharpoons \mathcal{O}(\pi)\subset G\cdot( \mathcal{O}(\tau)+\frakn).
\]
\subsection{Examples}
In this section we will give some examples and applications to the Theorem \ref{maintheorem}. 
\subsubsection{$n=3$}
Consider the standard parabolic subgroup $P_{[21]}=L_{[21]}N_{[21]}$ where the Levi subgroup $L_{[21]}=GL(2)\times GL(1)$, and the unipotent radical
\[
    N_{[21]} = \begin{pmatrix}
        1 & 0 & *\\
        0 & 1 & *\\
        0 & 0 & 1
    \end{pmatrix}.
\]
The generalized flag variety $G/P_{[21]}$ is isomorphic to $\mathbb{P}^2$, and the isomorphism can be written explicitly down with the Pl\"{u}cker coordinates
\begin{align*}
    G/P_{[21]}&\longrightarrow \mathbb{P}\left({\wedge^2\mathbb{C}^3}\right)\\
    [g]&\mapsto (g(e_1\wedge e_2),g(e_2\wedge e_3),g(e_3\wedge e_1)).
\end{align*}
\indent For an element $x=\left(\begin{smallmatrix}0&0&0\\0&0&1\\0&0&0 \end{smallmatrix}\right)\in\mathcal{O}_{[21]}$, the generalized Springer fiber $\mathcal{P}_{[21]}(x)$ above $x$ is isomorphic to the projective line $\mathbb{P}^1$, and is given by the equation $X_1=0$ in the Pl\"{u}cker coordinate $[X_0:X_1:X_2]$. The generalized Springer fiber above an element $\left(\begin{smallmatrix}0&1&0\\0&0&1\\0&0&0 \end{smallmatrix}\right)$ is the point $[1:0:0]$.\\
\indent For the case $\alpha = [11]$ and $\beta = [1]$, the two possible choices of $\gamma$ are $[21]$ and $[111]$ with Littlewood-Richardson coefficients $c_{[11],[1]^{[21]}} = c_{[11],[1]^{[111]}} = 1$. The orbit $\mathcal{O}_{[111]}$ is the zero orbit, hence for all parabolic subalgebras $\frakq=\frakm+\fraku$ conjugate to $\frakp_{[21]}$, the variety $p_{\frakm}\left(\mathcal{O}_{[111]}\cap \frakq\right)$ contains the zero orbit $\mathcal{O}_{[11]}\times\mathcal{O}_{[1]}$.\\

\indent The orbit $\mathcal{O}_{[21]}$ can be explicitly described with matrices:
\[
    \mathcal{O}_{[21]} = \left\{A\in\frakg\mid A^2=0\text{ and }A\neq 0\right\}.
\]
We can represent any element in $\mathfrak{p}_{[21]}$ as a matrix of the shape 
$
    \begin{pmatrix}
        A & v\\
        0 & b
    \end{pmatrix}
$
where $v\in\mathbb{C}^2$. The subvariety $\mathcal{O}_{[21]}\cap \mathfrak{p}_{[21]}$ is the following subset
\[
    \mathcal{O}_{[21]}\cap \mathfrak{p}_{[21]} = \left\{\begin{pmatrix}
        A & v\\
        0 & 0
    \end{pmatrix}\neq 0\mid A^2=0, Av=0\right\}
\]
of $\mathfrak{p}_{[21]}$. The elements $[g]$ in the generalized Springer fiber $\mathcal{P}_{[21]}(x)$ are those who satisfy $\mathrm{Ad}(g)^{-1}x\in  \mathcal{O}_{[21]}\cap \mathfrak{p}_{[21]}$. The point $[0:0:1]\in\mathcal{P}_{[21]}(x)$ projects to the zero orbit $\mathcal{O}_{[11]}$ and the affine chart $[1:0:x]$ projects to $\mathcal{O}_{[2]}$.\\

\indent The orbit $\mathcal{O}_{[3]}$ is the set of matrices satisfying
\[
    \mathcal{O}_{[3]} = \left\{A\in\frakg\mid A^3=0,\; A^2\neq 0\text{ and }A\neq 0\right\}.
\]
Therefore, the subvariety $\mathcal{O}_{[3]}\cap\frakp_{[21]}$ is the following set of matrices:
\[
    \mathcal{O}_{[3]}\cap \mathfrak{p}_{[21]} = \left\{\begin{pmatrix}
        A & v\\
        0 & 0
    \end{pmatrix}\neq 0\mid A^3=0, A^2v=0,\; A^2\neq 0\text{ or }Ab\neq 0\right\}.
\]
The generalized Springer fiber over a point of $\mathcal{O}_{[3]}$ is the point $[1:0:0]$, and the whole set $\mathcal{O}_{[3]}\cap \mathfrak{p}_{[21]}$ projects to the orbit $\mathcal{O}_{[2]}$.

\subsubsection{$n=6$}
Consider the Levi blocks $GL(4)\times GL(2) \subset GL(6)$, and consider the orbit $\mathcal{O}_{[22]}\times\mathcal{O}_{[11]}$. Between the induced orbit $[33]$ and $[2211]$, the poset of $G$-orbits contained in $G\cdot (\mathcal{O}_{[22],[11]}+\frakn)$ is displayed as below:
\begin{center}
    \begin{tikzpicture}
        \node (top) at (0,0) {$[33]$};
        \node [below  of=top] (bottom1)  {$[321]$};
        \draw [black,  thick] (top) -- (bottom1);
        \node [below left of=bottom1] (bottomleft2)  {$\stkout{[3111]}$};
        \draw [black,  thick] (bottom1) -- (bottomleft2);
        \node [below right of=bottom1] (bottomright2)  {$\stkout{[222]}$};
        \draw [black,  thick] (bottom1) -- (bottomright2);
        \node [below right of=bottomleft2] (bottom3)  {$[2211]$};
        \draw [black,  thick] (bottomleft2) -- (bottom3);
        \draw [black,  thick] (bottomright2) -- (bottom3);
    \end{tikzpicture}
\end{center}
The orbits $[3111]$ and $[222]$ are excluded from the poset, since $c_{33,2211}^{3111}=c_{33,2211}^{222}=0$.\\
\indent It is important to mention that the $n=6$ case for $P_{[33]}$ is the first case in which we can expect a Littlewood-Richardson coefficient greater than 1. The partial flag variety $G/P_{[33]}$ is isomorphic to the Grassmannian $G(3,6)$ and can be embedded into the projective space $\mathbb{P}(\bigwedge^{3}\mathbb{C}^6)\cong \mathbb{P}^{19}$ with the Pl\"{u}cker embeddings by choosing the basis of $\bigwedge^{3}\mathbb{C}^6$ as $e_i\wedge e_j\wedge e_k$ for each triple $i<j<k$.\\
\indent We will discuss a slightly different space $\mathcal{P}_{\alpha,\beta}(x,y)$ by taking those elements $[g]$ in $\mathcal{P}_{\alpha,\beta}(x)$ such that $p_\mathfrak{l}(\mathrm{Ad}^{-1}(g)x)$ is a fixed element $y\in\mathcal{O}_{\alpha}\times\mathcal{O}_{\beta}$. There is a map
\[
    \sigma: \mathcal{P}_{\alpha,\beta}(x,y)\cdot P\rightarrow \mathrm{Spec}(\CC[\mathfrak{p}]^{L})
\]
which takes every $g$ to $\mathrm{Ad}(g)^{-1}x$, and the ring $\mathbb{C}[\frakp]^L$ is the ring of $\mathrm{Ad}L$-invariant regular functions on $\frakp$ under the adjoint action of $L$. Any element of $\mathrm{Spec}(\CC[\mathfrak{p}]^{L})$ lying in the image of $\sigma$ can be represented by a matrix
\[
    \left(
        \begin{array}{cccccc}
        0 & 1 & 0 & u_{1,4} & u_{1,5} & u_{1,6} \\
        0 & 0 & 0 & u_{2,4} & u_{2,5} & u_{2,6} \\
        0 & 0 & 0 & u_{3,4} & u_{3,5} & u_{3,6} \\
        0 & 0 & 0 & 0 & 1 & 0 \\
        0 & 0 & 0 & 0 & 0 & 0 \\
        0 & 0 & 0 & 0 & 0 & 0 \\
        \end{array}
    \right).
\]
The images of the map $\sigma$ for different choices of $\gamma$ are listed in the following table:
\begin{center}
    \begin{tabular}{ccc|c}
        \hline\hline
        $\gamma$ & $\dim\mathcal{P}_{\alpha,\beta}(x)$ & $c_{[21],[21]}^\gamma$ & Ideal\\\hline
        $[2211]$ & 5 & 1 & $(u_{3,6},u_{2,6},u_{2,4},u_{3,4},u_{1,4}+u_{2,5})$\\\hline
        $[222]$ & 4 & 1 & $(u_{1,4}+u_{3,5},u_{2,4},u_{3,4},u_{2,6})$\\\hline
        $[3111]$ & 4 & 1 & $(u_{3,6},u_{2,6},u_{2,4},u_{3,4})$\\\hline
        $[3,2,1]$ & 2 & 2 & $(u_{3,4},u_{2,4}),(u_{2,6},u_{2,4})$\\\hline
        $[3,3]$ & 1 & 1 & $(u_{2,4})$\\\hline
        $[4,1,1]$ & 1 & 1 & $(u_{2,6}u_{3,4}-u_{2,4}u_{3,6})$\\\hline
        $[4,2]$ & 0 & 1 & $(0)$\\\hline
    \end{tabular}.
\end{center}

\subsubsection{$n=8$}
We look at the Levi blocks $GL(4)\times GL(4) \subset GL(8)$, and consider the orbit $\mathcal{O}_{[22]}\times\mathcal{O}_{[22]}$. Between the induced orbit $[44]$ and $[2222]$, the poset of $G$-orbit $G\cdot (\mathcal{O}_{[22],[22]}+\frakn)$ is displayed as below:
\begin{center}
    \begin{tikzpicture}
        \node (44) at (0,0) {$[44]$};
        \node [below  of=44] (431)  {$[431]$};
        \draw [black,  thick] (44) -- (431);
        \node [below of=431] (422) {$[422]$};
        \draw [black,  thick] (431) -- (422);
        \node [below left of=422] (4211) {$\stkout{[4211]}$};
        \draw [black,  thick] (422) -- (4211);
        \node [below right of=422] (332) {$\stkout{[332]}$};
        \draw [black,  thick] (422) -- (332);
        \node [below of=332] (3311) {$[3311]$};
        \draw [black,  thick] (332) -- (3311);
        \node [below of=3311] (3221) {$[3221]$};
        \node [below left of=3221] (2222) {$[2222]$};
        \draw [black,  thick] (4211) -- (3221);
        \draw [black,  thick] (3311) -- (3221);
        \draw [black,  thick] (3221) -- (2222);
    \end{tikzpicture}
\end{center}
The orbits $[4211]$ and $[332]$ are excluded from the poset, since $c_{22,22}^{4211}=c_{22,22}^{332}=0$.
\subsubsection{$n=12$}
Consider the Levi blocks $GL(6)\times GL(6)\subset GL(12)$ and the orbit $\mathcal{O}_{[321]}\times \mathcal{O}_{[321]}$, if we represent any element of $\mathrm{Spec}(\CC[\frakp]^L)$ by the matrix
\[
    \left(
        \begin{array}{cccccccccccc}
        0 & 1 & 0 & 0 & 0 & 0 & u_{1,7} & u_{1,8} & u_{1,9} & u_{1,10} & u_{1,11} & u_{1,12} \\
        0 & 0 & 1 & 0 & 0 & 0 & u_{2,7} & u_{2,8} & u_{2,9} & u_{2,10} & u_{2,11} & u_{2,12} \\
        0 & 0 & 0 & 0 & 0 & 0 & u_{3,7} & u_{3,8} & u_{3,9} & u_{3,10} & u_{3,11} & u_{3,12} \\
        0 & 0 & 0 & 0 & 1 & 0 & u_{4,7} & u_{4,8} & u_{4,9} & u_{4,10} & u_{4,11} & u_{4,12} \\
        0 & 0 & 0 & 0 & 0 & 0 & u_{5,7} & u_{5,8} & u_{5,9} & u_{5,10} & u_{5,11} & u_{5,12} \\
        0 & 0 & 0 & 0 & 0 & 0 & u_{6,7} & u_{6,8} & u_{6,9} & u_{6,10} & u_{6,11} & u_{6,12} \\
        0 & 0 & 0 & 0 & 0 & 0 & 0 & 1 & 0 & 0 & 0 & 0 \\
        0 & 0 & 0 & 0 & 0 & 0 & 0 & 0 & 1 & 0 & 0 & 0 \\
        0 & 0 & 0 & 0 & 0 & 0 & 0 & 0 & 0 & 0 & 0 & 0 \\
        0 & 0 & 0 & 0 & 0 & 0 & 0 & 0 & 0 & 0 & 1 & 0 \\
        0 & 0 & 0 & 0 & 0 & 0 & 0 & 0 & 0 & 0 & 0 & 0 \\
        0 & 0 & 0 & 0 & 0 & 0 & 0 & 0 & 0 & 0 & 0 & 0 \\
        \end{array}
    \right)
\]
If we set $\gamma=[53211]$, the ideals for the image of the projection map from the 6 dimensional Satake fiber with 4 irreducible components  are the following four ideals:
\begin{align*}
    I_1 &= (u_{6,10},u_{6,7},u_{5,10},u_{5,7},u_{3,10},u_{3,7})\\
    I_2 &=(u_{6,7},u_{5,7},u_{3,7},\\
    &u_{5,12}u_{6,10}-u_{5,10}u_{6,12},u_{3,12}u_{6,10}-u_{3,10}u_{6,12},u_{3,12}u_{5,10}-u_{3,10}u_{5,12}\\
    & u_{3,12}u_{4,7}+u_{3,12}u_{5,8}-u_{2,7}u_{5,12}-u_{3,8}u_{5,12},\\
    & u_{3,10}u_{4,7}+u_{3,10}u_{5,8}-u_{2,7}u_{5,10}-u_{3,8}u_{5,10})\\
    I_3 &= (u_{5,12},u_{5,10},u_{5,7},u_{3,12},u_{3,10},u_{3,7})\\
    I_4 &=(u_{3,12},u_{3,10},u_{3,7},\\
    &u_{5,12}u_{6,10}-u_{5,10}u_{6,12},u_{5,12}u_{6,7}-u_{5,7}u_{6,12},u_{5,10}u_{6,7}-u_{5,7}u_{6,10}\\
    & u_{2,10}u_{6,7}+u_{3,11}u_{6,7}-u_{2,7}u_{6,10}-u_{3,8}u_{6,10},\\
    & u_{2,10}u_{5,7}+u_{3,11}u_{5,7}-u_{2,7}u_{5,10}-u_{3,8}u_{5,10}).
\end{align*}
The Littlewood-Richardson coefficient $c_{[321],[321]}^{[53211]}$ of this case is equal to 4.
\bibliographystyle{alpha}
\bibliography{finalpaper}
\end{document}